\newcommand{\beq}{\begin{eqnarray*}}
\newcommand{\feq}{\end{eqnarray*}}
\newcommand{\beqn}{\begin{eqnarray}}
\newcommand{\feqn}{\end{eqnarray}}
\DeclareMathOperator{\sign}{sgn}
\newcommand{\RN}[1]{%
  \textup{\uppercase\expandafter{\romannumeral#1}}%
}
\newtheorem{theorem}{Theorem}[section]
\newtheorem{lemma}[theorem]{Lemma}
\newtheorem{corollary}[theorem]{Corollary}
\newtheorem{proposition}[theorem]{Proposition}
\theoremstyle{definition}
\theoremstyle{remark}
\newtheorem{remark}[theorem]{Remark}
\numberwithin{equation}{section}
\newcommand\numberthis{\addtocounter{equation}{1}\tag{\theequation}}
\begin{document}
\title[Critical thresholds in a hyperbolic relaxation system]
{Sharp critical thresholds in a hyperbolic system with relaxation}

\author{Manas Bhatnagar and Hailiang Liu}
\address{Department of Mathematics, Iowa State University, Ames, Iowa 50010}
\email{manasb@iastate.edu}
\email{hliu@iastate.edu} 
\keywords{Critical thresholds, global regularity, shock formation, hyperbolic systems}
\subjclass{35L65; 35B30} 
\begin{abstract} 
We propose and study a one-dimensional $2\times 2$ hyperbolic Eulerian system with local relaxation   from critical threshold phenomena perspective. The system features dynamic transition between strictly and weakly hyperbolic. For different classes of relaxation we identify intrinsic {\bf critical thresholds}  for initial data that distinguish global regularity and finite time blowup.  For relaxation independent of density, we estimate bounds on density in terms of velocity where the system is strictly hyperbolic.       
\end{abstract}
\maketitle

\section{Introduction}
It is a generic phenomena that homogeneous systems of quasilinear hyperbolic systems break down, i.e., the derivative of solutions become unbounded in finite time. The presence of source terms can lead to a delicate balance and persistence of global-in-time solutions for a large set of initial data. The existence of a threshold manifold on the initial phase space so that initial data on one side of the curve results in global-in-time solutions while the other side leads to solutions having shocks/concentration in finite time, is precisely the critical threshold phenomena.
For Euler-Poisson equations, it was studied for the first time in \cite{ELT01} , followed by threshold analysis on various hyperbolic balance laws, see, e.g., 
 \cite{BL19, CCZ16, LL08, LL09j,LL09, LT02, LT03, LT04, TT14, WTB12}.
 
In this work we are concerned with the critical threshold phenomena for the following Eulerian balance laws, 
\begin{subequations}
\label{hypMainwic}
\begin{align}
\label{hypMain}
\begin{aligned}
& \rho_t +(\rho v)_x=0,\; x\in \mathbb{R}, \; t>0,\\
&  u_t+ uu_x=\rho(v-u),\\
\end{aligned}
\end{align}
subject to initial density and velocity, 
\begin{align}
\label{hypMain2}
\begin{aligned}
(\rho (0,x), u(0,x))= (\rho_0 (x)\geq 0, u_0 (x)).
\end{aligned}
\end{align}
\end{subequations}
This system is closed only when $v$ is related to $\rho$ and $u$. 

The key motivation behind \eqref{hypMainwic}, as argued in \cite{BL202}, is to extend the usual mass-transport equation
\begin{align}\label{rv}
\rho_t +(\rho v)_x=0, 
\end{align}
into a class of balance laws.  Here, $v \in \mathbb{R}$ represents a mean velocity field. If $v$ is given in terms of the density variable $\rho$, then  (\ref{rv}) becomes closed.  
In this case, the system is considered to be in local equilibrium. 
However, very often $v$ depends on some extra variables in addition to the conserved density. The extra variable may be used to characterize non-equilibrium 
features of the system under consideration. Choosing a suitable non-equilibrium variable and determining its evolution equation are the fundamental tasks of irreversible thermodynamics \cite{GM62, Wa19}. 

In \cite{BL202}, the authors analyzed the system endowed with
$$
v= Q\ast u,  
$$
where $Q$ is a symmetric, nonnegative kernel of integral one. Their main result  states that there is a 
global-in-time $C^1$ solution if and only if 
$$
u_{0x}(x)+\rho_0(x)\geq 0 \quad \forall x\in\mathbb{R}.
$$ 
This sharp threshold is remarkable for a system of nonlocal hyperbolic equations.  A natural question arises that if the relaxation term $v$ is local, i.e., $v = f(\rho ,u)$, a smooth function, then whether the system 
\begin{subequations}
\label{main}
\begin{align} 
& \rho_t + (\rho f(\rho ,u))_x=0, \label{maina}\\ 
& u_t + uu_x = \rho (f(\rho ,u)-u) \label{mainb}
\end{align}
\end{subequations}
admits a similar critical threshold phenomena. A more specific question would be: under what conditions on $f(\rho, u)$ does this system allow for a precise characterization of critical threshold phenomena?

A unique feature of this system is that it may transition from being strictly hyperbolic to weakly hyperbolic (see Remark \ref{mixedhyperrem}). How to handle such a situation in perspective of critical threshold phenomena for \eqref{main} is the main task in this work.
The structure of the system \eqref{main} allows us to obtain an inhomogeneous transport equation for the quantity $e:=u_x+\rho$,  
\begin{align}\label{ee}
e_t + u e_x = -e(e-\rho). 
\end{align}
Thanks to this relation, we can compare the regularity of $e$ and $\rho$ and obtain global-in-time bounds on $\rho$ and $u_x$ simultaneously. However, due to $\rho$ and $u$ having different local propagation speeds, it is not sufficient to control $e$ alone to extend the solution globally.  This is the main difference between \eqref{main} and the system with 
$f=Q*u$ in \cite{BL202}, though 
equation (\ref{ee}) was also used in identifying the critical thresholds for the case $f=Q*u$.   Equation (\ref{ee}) is also similar to an equation derived by the authors in \cite{CCTT16} for the Euler alignment system which later lead to a string of regularity results for nonlocal Cucker-Smale alignment dynamics, \cite{DKRT17, HT17, KT18}. It would be interesting to develop critical threshold theory in multiple dimensions, for which the above control on $e$ is lacking.  

Our critical threshold analysis relies on local existence of classical solutions. However, the conventional local existence theorems seem to be inapplicable due to the system changing type from strictly to weakly hyperbolic. For the local existence including the weakly hyperbolic case, the key is to use the system augmented with the equation for $e=u_x+\rho$, which renders the coefficient matrix in the resulting system diagonal. Conventional existence/uniqueness theorems can then be applied to the augmented system, leading to a modified version of the local existence valid for all cases.  


\subsection{Related work}
Critical thresholds of weakly hyperbolic balance laws ($2\times 2$ systems) are relatively easier to obtain due to the propagation along a single characteristic field \cite{BL19, CCZ16, ELT01, LT01}. Strongly hyperbolic $2\times 2$ systems are relatively difficult to analyze from critical threshold point of view due to the presence of dynamic coupling between two different characteristic fields. One of the first works to analyze the blow up of $2\times 2$ hyperbolic conservation laws is by Lax \cite{Lax64}, where sufficient conditions for existence of smooth solutions for homogenous systems is derived. Later works \cite{LL09j, LL09, TW08}  leveraged these techniques to derive thresholds in $2\times 2$ systems with source terms. See \cite{TW08} for results on 1D Euler-Poisson systems with pressure, \cite{LL09j, LL09} for isentropic Euler systems with both pressure and relaxation, while the system in  \cite{LL09} is in Lagrangian coordinate.  
The system (\ref{main}) differs from the relaxation systems of \cite{LL09j, LL09} in two key aspects: (i) the implicit relaxation  form in (\ref{main}) allows for rich equilibria $f(\rho, \phi(\rho))$ with $\phi \in \{u: \; f(\rho, u)=u\}$, instead of the usual explicit relaxation form as in \cite{LL09j, LL09}; (ii)  (\ref{main}) can transition from being strongly hyperbolic to weakly hyperbolic (see Remark \ref{mixedhyperrem} and note thereafter), but systems in \cite{LL09j, LL09} are strictly hyperbolic. 
More importantly,  in these works, due to presence of two characteristic fields, one obtains in general only upper and lower thresholds. For the class of systems  (\ref{main}) with $f=f(u)$, we distinguish between strictly and weakly hyperbolic cases, and  for each case we obtain sharp critical threshold results. For general $f=f(\rho,u)$, we give sufficient conditions for global solution as well as finite time breakdown.
 
The rest of the paper is organized as follows:  Section \ref{mainresults} contains the main results. It has two subsections. The first one is devoted to local existence theorems owing to which the a priori analysis is carried out and the other contains the critical threshold results. Section \ref{frho0} contains the proofs of the main results for the case where $f$ depends on velocity only. Section \ref{genf} contains the same for general $f$. And the appendix contains the proof to Theorem \ref{local2}. 

\textbf{Notation:} $C^k_b(\mathbb{R})$ for $k=0,1,\ldots$ is the set of bounded functions which are $k$ times continuously differentiable with all the derivatives upto $k$-th order being uniformly bounded. For a function $g:\mathbb{R}\to\mathbb{R}$,
$$
||g||_\infty := \sup_{x\in\mathbb{R}} |g(x)|.
$$

\section{Main Results}
\label{mainresults}
\subsection{Local existence results}
We reformulate \eqref{main} as 
\begin{align}	
\label{matsysgen}
\left[
\begin{array}{c}
\rho\\
u
\end{array}
\right]_t +
\left[
\begin{array}{cc}
\rho f_\rho + f & \rho f_u \\
0 & u
\end{array}
\right]
\left[
\begin{array}{c}
\rho\\
u
\end{array}
\right]_x = 
\left[
\begin{array}{c}
0\\
\rho(f-u)
\end{array}
\right].
\end{align}
This is a hyperbolic system since  the two eigenvalues of the coefficient matrix,  
$$\lambda_1 =\rho f_\rho + f  \quad \text{and} \quad  \lambda_2 = u,$$
are real. Local existence of classical solutions of strictly hyperbolic systems is well known in the literature, see  \cite[Theorem 7.7.1]{Da16}. 
We state the relevant theorem for our case here.
\begin{theorem}
\label{local1}
Let $f$ be smooth such that the system \eqref{main} is strictly hyperbolic. Let $0\leq\rho_0(x)$ and $u_0(x)$ be functions in $C_b^1(\mathbb{R})$. 
Then,  there exists $T>0$ such that the solution  
$$
\rho,u \in C^1((0,T)\times\mathbb{R}).
$$ 
Moreover, the life span $T$ can be extended as long as 
$$
\|\partial_x (\rho, u)(t, \cdot)\|_\infty<\infty 
$$
and $\|(\rho, u)(t, \cdot)\|_\infty$ is bounded. 
\end{theorem}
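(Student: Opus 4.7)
The plan is to invoke the classical local existence theorem for quasilinear strictly hyperbolic systems, applied to the reformulated system \eqref{matsysgen}, and then to derive the continuation criterion as a bootstrap corollary.

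First, I would verify that \eqref{matsysgen} has the canonical quasilinear form $U_t + A(U)U_x = B(U)$ with $U=(\rho, u)^T$, where both $A$ and $B$ are smooth in $U$ (since $f$ is smooth). Under the strict hyperbolicity assumption, the eigenvalues $\lambda_1 = \rho f_\rho + f$ and $\lambda_2 = u$ are distinct in a neighborhood of the image of $(\rho_0, u_0)$, so $A$ admits a smooth basis of left and right eigenvectors there. With $(\rho_0, u_0) \in C^1_b(\mathbb{R})$, the hypotheses of \cite[Theorem 7.7.1]{Da16} are satisfied, and that theorem directly furnishes a classical $C^1$ solution on some short time interval $(0, T)$.

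Next, for the continuation statement, I would differentiate \eqref{matsysgen} in $x$ and read off transport-type equations for $\rho_x$ and $u_x$ along the two characteristic families. These derivative equations carry Riccati-type quadratic nonlinearities in $\rho_x, u_x$ but have coefficients depending smoothly on $\rho, u, \rho_x, u_x$. Given the hypothesized bounds $\|(\rho, u)(t, \cdot)\|_\infty < \infty$ and $\|\partial_x(\rho, u)(t, \cdot)\|_\infty < \infty$ on $[0, T^*)$, pick $t_0 < T^*$ close to $T^*$ and restart the local existence with data $(\rho(t_0, \cdot), u(t_0, \cdot)) \in C^1_b$; the resulting uniform lifespan extends the solution past $T^*$.

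The main subtlety I anticipate is ensuring that the lifespan obtained on each restart can be chosen uniformly in $t_0$. The length of the existence interval from \cite[Theorem 7.7.1]{Da16} depends on the modulus of strict hyperbolicity $|\lambda_1 - \lambda_2|$ and on the $C^1_b$ norms of the data. Since the assumed uniform bounds confine $(\rho, u)(t, \cdot)$ to a compact set in state space, the strict hyperbolicity gap remains bounded below there (by the standing assumption on $f$), and the $C^1_b$ bound on the data at $t_0$ is uniform; hence so is the lifespan, which is precisely what legitimizes the extension argument. This is the only point where the strict hyperbolicity hypothesis is genuinely used beyond the initial application of the theorem, and it is exactly why a separate local existence result is needed to handle the transition to weak hyperbolicity.
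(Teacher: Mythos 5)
Your proposal takes essentially the same route as the paper: Theorem \ref{local1} is not proved separately there, but is obtained by applying the classical local existence result \cite[Theorem 7.7.1]{Da16} to the quasilinear strictly hyperbolic form \eqref{matsysgen}, with the continuation criterion part of that cited theorem. Your extra discussion (smooth eigenvector basis under strict hyperbolicity, restart with lifespan uniform over data confined to a compact, strictly hyperbolic region) is a correct elaboration of what the citation already supplies.
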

\begin{remark} 
\label{mixedhyperrem}
In the case $f=f(u)$,  it can be shown that the set 
$$
\Sigma=\{u:\; u=f(u)\}, 
$$ 
is invariant under the system \eqref{main}. Hence for initial data with $u_0(x) \not\in \Sigma$ 
for all $x \in \mathbb{R}$, we have $u(t, x)\not\in \Sigma$ 
for $x \in \mathbb{R}$ and $t>0$. That is, system \eqref{main} is strictly hyperbolic for all $t>0$ for solutions under consideration if $u_0(x) \not\in \Sigma$.
\end{remark} 

For general $f$, the set 
$$
\Sigma_1 =\{(\rho, u):\; \lambda_1=\lambda_2\} 
$$
is not necessarily invariant. That is, even $(\rho_0, u_0)(x) \not\in \Sigma_1$  for all $x \in \mathbb{R}$,  
it is likely that  $(\rho(t, x), u(t, x)) \in \Sigma_1$ 
at some $t>0$. In other words, the system may transition from being strictly hyperbolic to weakly hyperbolic at some $t>0$. 
A modified version of the local existence for weakly hyperbolic case is thus stated in the following theorem.   
\begin{theorem}
\label{local2}
Consider the system \eqref{main} with a smooth function $f=f(\rho, u)$. 
Let $(\rho_0\geq 0 ,u_0)\in C_b^1(\mathbb{R})\times C_b^{2}(\mathbb{R}) $.
Then, there exists $T>0$ such that the solution  
$$
\rho, u \in C^1((0,T)\times\mathbb{R}).
$$ 
In addition, 
$$
u_x + \rho \in C^1((0,T)\times\mathbb{R}).
$$
Moreover, the life span $T$ can be extended as long as
$$
\|\partial_x (\rho, u)(t,\cdot)\|_\infty<\infty 
$$
and $\|(\rho, u)(t, \cdot)\|_\infty$ is bounded. 
\end{theorem}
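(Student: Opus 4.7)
The plan is to circumvent the possible breakdown of strict hyperbolicity of the $(\rho, u)$ system on $\Sigma_1$ by augmenting it with the auxiliary unknown $e := u_x + \rho$. Substituting $u_x = e - \rho$ into the conservative form of \eqref{maina}, keeping \eqref{mainb} intact, and appending the already derived equation \eqref{ee}, one obtains the $3\times 3$ system
\begin{align*}
\begin{pmatrix}\rho\\ u\\ e\end{pmatrix}_t + \begin{pmatrix}\lambda_1 & 0 & 0\\ 0 & \lambda_2 & 0\\ 0 & 0 & \lambda_2\end{pmatrix}\begin{pmatrix}\rho\\ u\\ e\end{pmatrix}_x = \begin{pmatrix}-\rho f_u(e-\rho)\\ \rho(f-u)\\ -e(e-\rho)\end{pmatrix},
\end{align*}
with $\lambda_1 = f + \rho f_\rho$ and $\lambda_2 = u$. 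The key feature is that the coefficient matrix is \emph{diagonal}, so the coincidence $\lambda_1 = \lambda_2$ on $\Sigma_1$ plays no role in the principal part and the system is hyperbolic in all cases. The hypothesis $u_0 \in C_b^2(\mathbb{R})$ forces the compatible initial datum $e_0 := u_{0x} + \rho_0$ to lie in $C_b^1(\mathbb{R})$, and a standard Picard iteration along the characteristic fields $\dot x = \lambda_1$ and $\dot x = \lambda_2$ (equivalently, the local existence theory for diagonal quasilinear hyperbolic systems, in the spirit of \cite[Theorem 7.7.1]{Da16}) yields a unique $C^1$ solution $(\rho, u, e)$ of the augmented system on some time interval $(0, T)$.

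The essential conceptual step is to \emph{close the loop}: I need to show that this augmented solution, with the above compatible datum, satisfies $u_x + \rho = e$ on $(0, T)\times\mathbb{R}$, so that $(\rho, u)$ genuinely solves \eqref{main}. Setting $w := u_x + \rho - e$, I use the three PDEs above together with the identity $u_x = (u_x+\rho) - \rho$ to compute $w_t + u w_x$. After the cross-terms cancel, the equation collapses to the linear homogeneous transport equation
\begin{align*}
w_t + u w_x = w\,(\rho f_u - u_x - e),
\end{align*}
whose coefficient is uniformly bounded on $(0, T)$. Since $w(0, \cdot) \equiv 0$ by construction, uniqueness along the $u$-characteristics forces $w \equiv 0$; consequently $u_x + \rho = e$ and in particular $u_x + \rho \in C^1((0, T) \times \mathbb{R})$.

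For the continuation criterion, the standard extension principle for the augmented diagonal system propagates the solution as long as $(\rho, u, e)$ and $(\rho_x, u_x, e_x)$ remain pointwise bounded. Pointwise bounds on $(\rho, u)$ and on $(\rho_x, u_x)$ are the assumptions, and $|e| \leq |u_x| + \rho$ is immediate. Only $e_x$ requires work: differentiating the $e$-equation in $x$ and substituting $u_x = e - \rho$ yields the linear ODE
\begin{align*}
\frac{d}{dt} e_x = e_x(-3e + 2\rho) + e\,\rho_x
\end{align*}
along the characteristic $\dot x = u$, and a direct Grönwall estimate bounds $\|e_x(t, \cdot)\|_\infty$ in terms of $\|\rho\|_\infty$, $\|u_x\|_\infty$ and $\|\rho_x\|_\infty$. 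This closes the continuation.

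I expect the delicate step to be the cancellation leading to the \emph{linear} and \emph{homogeneous} equation for $w$: this is precisely the algebraic mechanism by which the augmentation is consistent with the original system, and it must be verified by careful bookkeeping of the nonlinear terms. Once the form of the augmented system has been identified, the remaining ingredients — diagonal local existence and Grönwall-based continuation — are essentially routine.
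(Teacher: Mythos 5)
Your proposal is correct and follows essentially the same route as the paper's appendix proof: the same diagonal augmented system in $(\rho,u,e)$ with $\lambda_1=f+\rho f_\rho$, the same consistency argument showing $w=u_x+\rho-e$ satisfies the linear homogeneous transport equation $w_t+uw_x=w(\rho f_u-u_x-e)$ and hence vanishes, and the same continuation step via the equation $e_{xt}+ue_{xx}=(2\rho-3e)e_x+e\rho_x$. The only cosmetic difference is that you phrase local existence as Picard iteration along characteristics, whereas the paper invokes \cite[Theorem 7.7.1]{Da16} directly for the (diagonal, hence symmetric) hyperbolic system.
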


\begin{remark} 
\label{strictweak1}
A special case for such situation is when $f(\rho,u)=u$. Then the system \eqref{main} reduces to the pressureless Euler system,
$$
\rho_t +(\rho u)_x=0, \quad  u_t+ uu_x=0,
$$ 
which is a weakly hyperbolic system. The method of characteristics allows for a precise solution of form 
$$
\rho(t, x)=\frac{\rho_0(\alpha)}{1+u_{0x}(\alpha)t}, \; u_x(t, x)=\frac{u_{0x}(\alpha)}{1+u_{0x}(\alpha)t}
$$
along $x=\alpha +u_0(\alpha)t$ for any $\alpha \in \mathbb{R}$. One can verify that when shock forms,  $u_x \to -\infty$, and simultaneously $\rho \to \infty$. In addition, we observe that in order for $\rho$ to be in $C^1$, we need $u_0\in C^2$ along with $\rho_0\in C^1$.  
\end{remark}

Note that the difference between the two local existence theorems is essentially in the smoothness required for $u_0$. This change arises due to the assumption of strict hyperbolicity in Theorem \ref{local1} which is absent in Theorem \ref{local2}. For existence of classical solutions to strictly hyperbolic systems such as the isentropic Euler system, we require the same degree of smoothness (at least $C^1$) for $\rho_0$ and $u_0$. $\rho,u$ remain bounded for all times and breakdown of smooth solution occurs when $|\rho_x|\to\infty$ or $|u_x|\to\infty$ as stated in Theorem \ref{local1}. However, for weakly hyperbolic systems such as the pressureless Euler system (see Remark \ref{strictweak1}), $-u_x$ and $\rho$ blow up simultaneously in the event of breakdown of classical solution. This physical property of weakly hyperbolic systems requires the assumption of an extra degree of smoothness in $u_0$ compared to $\rho_0$. 

To our knowledge, there seems no local existence theorems which give existence of such mixed type (see Remark \ref{mixedhyperrem}) and asymmetric systems, \eqref{matsysgen}. Therefore, we present a proof of Thoerem \ref{local2} in the Appendix.
\subsection{Threshold results}
We begin by stating results for a special form of $f$.

\begin{theorem}
\label{thm1}Let $f $ be a smooth function depending on $u$ only, i.e., $f_\rho = 0$. Consider the system \eqref{main} with initial conditions $ (\rho_0\geq 0 ,u_0) \in C_b^1(\mathbb{R})\times C_b^1(\mathbb{R})$ with $ \inf|f(u_0) - u_0 |>0$.  If $f_u \leq 0$ for solution $u$ under consideration, then 
\begin{enumerate}
\item
\textbf{Bounds on $u$ and $\rho$:}
$u(t,\cdot)$ is uniformly bounded with bounds as in \eqref{usingleroot} and satisfies $|f(u(t,\cdot))-u(t,\cdot)|>0$ for $t>0$. And
$$
\rho(t,x)\leq \frac{\sup\rho_0|f(u_0)-u_0|}{e^{\int_{u_0(x)}^{u(t,x)}\frac{d\xi}{f(\xi)-\xi}}|f(u(t,x))-u(t,x)|}.
$$
\item
\textbf{Global solution:} If
$$
u_{0x}(x)+\rho_0(x)\geq 0,\quad \forall x\in\mathbb{R},
$$ 
then there exists a global classical solution $\rho,u\in C^1((0,\infty)\times\mathbb{R})$ to \eqref{main}.
Moreover, $\rho, u_x$ are uniformly bounded  with 
$$
0 \leq \rho(t,x) \leq M, \quad 0 \leq u_x(t,x) + \rho(t,x)  \leq M, \qquad \forall t>0,x\in\mathbb{R},
$$
where $M = \max\{ \sup \rho_0 ,\sup (u_{0x}+\rho_0) \}$. 
\item
\textbf{Finite time breakdown:} If $\exists x_0\in\mathbb{R}$ such that
$$
u_{0x}(x_0)+ \rho_0(x_0)<0,
$$
then $\lim_{t\to t_c} |\rho_x| =\infty$ or $\lim_{t\to t_c} |u_x| = \infty$ for some $t_c>0$.
\end{enumerate}
\end{theorem}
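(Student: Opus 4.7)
The plan is to rely on the local existence result Theorem \ref{local1} (the system stays strictly hyperbolic by Remark \ref{mixedhyperrem} under the assumption $\inf|f(u_0)-u_0|>0$) and to derive the required a priori bounds along two characteristic families: the $u$-characteristics $\dot{x}=u$, which carry the auxiliary quantity $e=u_x+\rho$ via \eqref{ee}, and the $f$-characteristics $\dot{x}=f(u)$, adapted to the mass equation.

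For part (1), set $g(u):=f(u)-u$, so $g'=f_u-1\leq -1$ is strictly decreasing with a unique zero $\bar u$. The hypothesis $\inf|g(u_0)|>0$ together with continuity puts $u_0$ uniformly on one side of $\bar u$; WLOG assume $g(u_0)>0$. Along the $u$-characteristic, $\dot u=\rho g(u)\geq 0$, so $u$ is nondecreasing and trapped in $[\inf u_0,\bar u)$, and consequently $g(u(t,\cdot))$ stays away from zero with a sign. For the density bound I compute, along $\dot{x}=f(u)$,
\[
\dot u=g(u)(\rho+u_x)=g(u)\,e,\qquad \dot\rho=-\rho f_u(u)u_x,
\]
and introduce the composite weight $W:=\rho\,g(u)\,e^{H(u)}$ with $H'(u)=1/g(u)$. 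A short computation using $e-u_x=\rho$ and $g'+1=f_u$ collapses everything to $\dot W=f_u(u)\,\rho\,W$, which is $\leq 0$ since $f_u\leq 0$ and $\rho\geq 0$. Thus $W$ is non-increasing along $f$-characteristics; tracing back to the foot and rearranging gives exactly the stated pointwise bound on $\rho$.

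For part (2), assume $e_0\geq 0$. From \eqref{ee}, along $\dot x=u$, $\dot E=E(\rho-E)$, so $E=0$ is invariant and nonnegativity of $e$ propagates. The same ODE shows that if $\rho\leq M$ along the characteristic then $E\leq\max(E_0,M)$, while $\dot\rho=\rho|f_u|(e-\rho)$ along $f$-characteristics shows that if $e\leq M$ then $\rho\leq\max(\rho_0,M)$. A standard maximum-principle bootstrap applied to $V(t):=\sup_x\max(\rho,e)(t,\cdot)$ closes the loop and gives $V(t)\leq V(0)=M$. Hence $0\leq\rho,\,e\leq M$ uniformly, and $u_x=e-\rho$ is uniformly bounded. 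A Gronwall estimate on the coupled characteristic ODEs for $\rho_x$ and $e_x$ (closing the $u_{xx}=e_x-\rho_x$ coupling) yields uniform control on $\rho_x$ as well, and Theorem \ref{local1} then extends the classical solution to all $t>0$. For part (3), if $e_0(x_0)<0$ I follow the $u$-characteristic from $x_0$ and track $E(t)=e(t,X(t))$. Since $\rho\geq 0$ and $E(0)<0$, the ODE $\dot E=-E^2+E\rho\leq -E^2$ gives $(1/E)'\geq 1$, so $E\to-\infty$ at some $t_c\leq 1/|e_0(x_0)|$. Because $\rho\geq 0$, this forces $u_x\to-\infty$ at $t_c$, which by the continuation criterion constitutes the claimed breakdown.

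The principal obstacle will be the uniform bound in part (2): the natural evolution equations for $\rho$ and $e$ travel along \emph{different} characteristic families, so one cannot close the estimate along a single ODE and must carry out a careful coupled comparison/maximum-principle argument. Discovering the weight $W$ in part (1) requires some algebraic guessing, but once the factor $g(u)\,e^{H(u)}$ is in place the computation is routine. Part (3) is the easiest, amounting to a Riccati-type analysis along a single characteristic.
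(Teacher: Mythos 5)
Your parts (1) and (3) are essentially the paper's own argument: the weight $W=\rho\,g(u)e^{H(u)}$ is exactly the Riemann invariant $R$ in \eqref{R1}, and your computation $\dot W=f_u\rho W$ along the $f$-characteristics is the ODE form of \eqref{invsys1}; the paper proves the resulting monotonicity via a barrier/maximum-principle argument rather than ODE comparison along characteristics, but that difference is immaterial. Likewise your invariant-region argument for $\rho,e\in[0,M]$ and the Riccati breakdown along the $u$-characteristic coincide with Propositions \ref{erhobound1} and the paper's proof of assertion (3).

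The genuine gap is in your closure of part (2), namely the bound on $\rho_x$. You propose a Gronwall estimate on the coupled characteristic ODEs for $\rho_x$ and $e_x$, "closing the $u_{xx}=e_x-\rho_x$ coupling." But $e_x=u_{xx}+\rho_x$ involves a second derivative of $u$, and Theorem \ref{thm1} only assumes $u_0\in C^1_b(\mathbb{R})$; the local solution furnished by Theorem \ref{local1} is merely $C^1$, so $e_x$ need not exist and the system \eqref{rhoxex} is not at your disposal. That route is precisely the paper's proof of Theorem \ref{thm2} (via Lemma \ref{linearrhs} and Corollary \ref{rhoxbound1}), which is why Theorem \ref{thm2} assumes $u_0\in C^2_b$. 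Under the hypotheses of Theorem \ref{thm1} the paper instead exploits the strict hyperbolicity you have already secured: differentiate the Riemann invariant, $r:=R_x=\rho_x(f-u)e^{\int}+\rho u_xf_ue^{\int}$, which involves only first derivatives of $(\rho,u)$; using $u_x=e-\rho$ and solving \eqref{rhox} for $\rho_x$ in terms of $r$, the transport equation for $r$ closes as a linear equation with coefficients bounded by the already-established bounds on $\rho$, $e$, $R$ (Proposition \ref{rhoxbound}), and $\rho_x$ is then recovered from $r$ by dividing by $f-u$, which is nonzero by Lemma \ref{invreg}. You need to replace your $e_x$-based Gronwall step by this (or an equivalent first-order) argument; as written, that step fails at the regularity level assumed in the theorem.
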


\begin{theorem}
\label{thm2}Let $f $ be a smooth function depending on $u$ only, i.e., $f_\rho = 0$. Consider the system \eqref{main} subject to initial conditions, $
(\rho_0\geq 0 ,u_0) \in C^1_b(\mathbb{R})\times C^2_b(\mathbb{R}) $.
 If $f_u \leq 0$ for the solution $u$ of consideration, then 
\begin{enumerate}
\item
\textbf{Global Solution:} If
$$
u_{0x}(x)+\rho_0(x)\geq 0,\quad \forall x\in\mathbb{R},
$$
then there exists a global solution 
$$
\rho, u\in C^1((0,\infty)\times \mathbb{R}) 
$$ 
to system \eqref{main}. 
Moreover, $u,\rho, u_x$ are uniformly bounded with bounds on $u$ as in \eqref{usingleroot} and,
$$
0 \leq \rho(t,x) \leq M, \quad 0 \leq u_x(t,x) + \rho(t,x) \leq M, \qquad \forall t>0,x\in\mathbb{R},
$$
where $M = \max\{ \sup \rho_0 ,\sup (u_{0x}+\rho_0) \}$. And we have the following,
$$
||\rho_x(t,\cdot)||_\infty \leq (1+||\rho_{0x}||_\infty+||u_{0xx}||_\infty+||\rho_0||_\infty  ) e^{12(||f||_{C^2}+1)\max\{M^3,1\}t},\quad t>0.
$$
\item
\textbf{Finite time breakdown:} If $\exists x_0\in\mathbb{R}$ such that
$$
u_{0x}(x_0)+ \rho_0(x_0)<0,
$$
then $\lim_{t\to t_c} u_x = -\infty$ for some $t_c>0$ at the rate of $O\left(\frac{1}{|t-t_c|}\right)$ or faster.
\end{enumerate}
\end{theorem}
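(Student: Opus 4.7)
The plan is to work with the augmented variable $e := u_x + \rho$ satisfying \eqref{ee}, together with the mass equation in nonconservative form $\rho_t + f(u)\rho_x + \rho f'(u)u_x = 0$, and to invoke Theorem \ref{local2} for local existence and the continuation criterion ``extend the solution as long as $\|(\rho,u)\|_\infty$ and $\|(\rho_x,u_x)\|_\infty$ remain finite.''

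For Part 1 I would first propagate $e\geq 0$: along $\dot X = u$, equation \eqref{ee} reads $\dot e = -e(e-\rho) = -e\,u_x$, which is linear in $e$, so the initial nonnegativity persists. Next I would establish $0\leq\rho,e\leq M := \max\{\sup\rho_0,\sup(u_{0x}+\rho_0)\}$ by a maximum-principle argument on $M_e(t):=\sup_x e(t,\cdot)$ and $M_\rho(t):=\sup_x\rho(t,\cdot)$: when $M_e > M_\rho$, at a near-maximum $x_e$ of $e$ one has $e_x\approx 0$ and $\dot e\approx -M_e(M_e-\rho(x_e))\leq 0$; when $M_\rho > M_e$, at a near-maximum $x_\rho$ of $\rho$ one has $\rho_x\approx 0$ and $\rho_t\approx -\rho f'(u)u_x = \rho|f'(u)|(e(x_\rho)-M_\rho)\leq 0$, using $f_u\leq 0$. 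Thus $\max(M_e,M_\rho)$ is non-increasing, yielding the stated bounds and $|u_x|=|e-\rho|\leq M$. For the $\rho_x$-bound, differentiating the $\rho$- and $e$-equations in $x$ and substituting $u_{xx}=e_x-\rho_x$ produces
\begin{align*}
\dot{\rho_x} &= \bigl(-2f'(u)u_x+\rho f'(u)\bigr)\rho_x - \rho f''(u)u_x^2 - \rho f'(u)e_x && \text{along } \dot Y = f(u),\\
\dot{e_x} &= -(2u_x+e)e_x + e\,\rho_x && \text{along } \dot X = u,
\end{align*}
and a sup-norm Gr\"onwall on $(\|\rho_x\|_\infty,\|e_x\|_\infty)$ using the $L^\infty$-bounds on $\rho,e,u_x$ delivers the claimed exponential estimate; the factor $\|u_{0xx}\|_\infty$ enters through $e_x(0,\cdot)=u_{0xx}+\rho_{0x}$. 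Global existence then follows from the continuation criterion of Theorem \ref{local2}.

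For Part 2, I pick $x_0$ with $e_0 := u_{0x}(x_0)+\rho_0(x_0)<0$ and follow the $u$-characteristic $X(\cdot)$ through $(0,x_0)$. The same linear-ODE argument preserves $e<0$, while $\rho\geq 0$ is preserved by mass conservation, so $e\rho\leq 0$ and the $e$-ODE satisfies the Riccati inequality $\dot e = -e^2+e\rho\leq -e^2$. Setting $z := 1/e < 0$ gives $\dot z\geq 1$ and $z(t)\geq 1/e_0+t$, which reaches $0$ at $t_c := -1/e_0 > 0$, forcing $e\to-\infty$ at some $t_c'\leq t_c$ with $|e(t)|\geq 1/(t_c-t)$. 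Since $u_x=e-\rho\leq e$, the same or faster blowup is inherited by $u_x$, giving the rate $O(1/|t-t_c'|)$.

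The main obstacle is Part 1: because $\rho$ and $e$ are transported along distinct velocity fields ($f(u)$ and $u$), the uniform $L^\infty$-bound cannot be extracted from a single transport equation, and the invariant region $\{0\leq\rho,e\leq M\}$ must be closed by comparing the two supremum functions simultaneously. Making the ``supremum attained'' step rigorous on all of $\mathbb{R}$ will require either a sliding-maximum device or a continuity/bootstrap argument on the continuation interval, and the same cross-characteristic coupling must be managed inside the Gr\"onwall loop for $(\|\rho_x\|_\infty,\|e_x\|_\infty)$.
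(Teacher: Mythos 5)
Your route is, in substance, the paper's own: the coupled system you derive for $(\rho_x,e_x)$ is exactly \eqref{rhoxex} after substituting $u_x=e-\rho$, the $[0,M]$ invariance of $(\rho,e)$ is Proposition \ref{erhobound1}, the sup-norm Gr\"onwall you invoke is the paper's Lemma \ref{linearrhs} (proved there with a barrier $\beta e^{\delta\gamma t}$ and $\delta\to 1^+$), and the breakdown argument via the Riccati inequality $\frac{de}{dt}\le -e^2$ along the $u$-characteristic is identical, including the rate. Moreover, the obstacle you single out (suprema over $\mathbb{R}$ not being attained) largely dissolves in your own formulation: along the two characteristic families the equations are exact ODEs, $\frac{d}{dt}\rho(t,Y)=f_u\,\rho(\rho-e)$ and $\frac{d}{dt}e(t,X)=-e(e-\rho)$, so the paper closes the invariant region by ODE comparison — whenever $\rho,e\in[0,M]$ one has $\frac{d}{dt}\rho(t,Y)\le f_u\rho(\rho-M)$ and $\frac{d}{dt}e(t,X)\le -e(e-M)$ — and no interior-maximum step with $e_x\approx 0$ or $\rho_x\approx 0$ is needed.

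The one genuine gap is that you never control $\|u(t,\cdot)\|_\infty$. The continuation criterion of Theorem \ref{local2} that you quote requires boundedness of $\|(\rho,u)(t,\cdot)\|_\infty$, not only of $\rho$, $u_x$ and $\rho_x$, and the conclusion of the theorem explicitly asserts the bound \eqref{usingleroot} on $u$; a uniform bound on $u_x$ gives no $L^\infty$ bound on $u$ over the line. The missing ingredient is the paper's Proposition \ref{ubound}: along $\frac{dX}{dt}=u$ one has $\frac{du}{dt}=\rho\,(f(u)-u)$, and since $f_u\le 0$ the function $f(u)-u$ is strictly decreasing, so (under the standing assumption that $\Sigma=\{u: f(u)=u\}$ is nonempty, which is implicit in the reference to \eqref{usingleroot}) its unique zero $u^\ast$ traps $u$ in $[\min\{\inf u_0,u^\ast\},\max\{\sup u_0,u^\ast\}]$ for as long as $\rho\ge 0$ stays bounded. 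Adding this step — and noting $e_x(0,\cdot)=u_{0xx}+\rho_{0x}$ so that your Gr\"onwall constant is dominated by the one in the statement — closes Part 1; Part 2 as you wrote it matches the paper.
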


\begin{remark}
\label{strictweak2}
The condition $\inf |f(u_0) - u_0|$ in Theorem \ref{thm1} makes the system \eqref{main} strictly hyperbolic. The key difference between the two theorems is that for Theorem \ref{thm1}, $\rho,u$ are bounded for all times as long as they are so initially. This is not the case with the weakly hyperbolic system (Theorem \ref{thm2}) where there might be density concentration, i.e., $\rho$ tends to $\infty$ in finite time along with $-u_x$.
\end{remark}

Next, we state results for more general $f$.
\begin{theorem}
\label{thm3}Let $f = f(\rho,u) $ be a smooth function of its variables. Consider the system \eqref{main} with initial conditions $(\rho_0\geq 0 ,u_0) \in C_b^1(\mathbb{R})\times C_b^2(\mathbb{R})$. If $f_u \leq 0$ for the solutions under consideration, then 
\begin{enumerate}
\item
\textbf{Bounds on $u$:} There exists a smooth function $\phi:\mathbb{R}^+\to\mathbb{R}$ such that,
$$
\min\left\{ \inf_{\mathbb{R}} u_0, \min \phi(\rho) \right\}\leq u(t,\cdot )\leq \max\left\{ \sup_{\mathbb{R}} u_0, \max\phi(\rho) \right\},
$$
for as long as $\rho \geq 0$ is bounded.
\item
\textbf{Bounds on $\rho,u_x$:} If
$$
u_{0x}(x)+\rho_0(x)\geq 0,\quad \forall x\in\mathbb{R},
$$
then $\rho, u_x$ are uniformly bounded  with 
$$
0 \leq \rho(t,x) \leq M, \quad 0 \leq u_x(t,x)+ \rho(t,x)   \leq M, \qquad \forall t>0,x\in\mathbb{R}.
$$
where $M = \max\{ \sup \rho_0 ,\sup (u_{0x}+\rho_0) \}$. 
\item
\textbf{Global solution:} If in addition to $u_{0x}+\rho_0\geq 0$,
\begin{itemize}
    \item $(\rho f)_{\rho\rho}\geq 0$, $f_{uu}\leq 0$ along with 
    $$
    \rho_{0x}(x)\geq 0,\quad  u_{0xx}(x)+\rho_{0x}(x)\geq 0,\ \forall x\in\mathbb{R},
    $$ 
    \begin{center} OR \end{center}
    \item $(\rho f)_{\rho\rho}\leq 0$, $f_{uu}\geq 0$ along with 
    $$
    \rho_{0x}(x)\leq 0,\quad u_{0xx}(x)+\rho_{0x}(x)\leq  0,\ \forall x\in\mathbb{R},
    $$
\end{itemize}
then there exists a global solution $\rho, u\in C^1((0,\infty)\times\mathbb{R})$ to system \eqref{main}. In addition,
$$
u_x + \rho \in C^1((0,\infty)\times\mathbb{R}).
$$
\item
\textbf{Finite time breakdown:} If $\exists x_0\in\mathbb{R}$ such that
$$
u_{0x}(x_0)+ \rho_0(x_0)<0,
$$
then $\lim_{t\to t_c} u_x = -\infty$ at the rate of $O\left(\frac{1}{|t-t_c|}\right)$ or faster, for some $t_c>0$.
\end{enumerate}
\end{theorem}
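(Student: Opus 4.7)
The plan is to treat the four conclusions in sequence, using the scalar equation \eqref{ee} for $e := u_x + \rho$ together with the characteristic form of the continuity equation,
\begin{equation*}
\rho_t + \lambda_1 \rho_x = -\rho f_u u_x = \rho f_u (\rho - e), \qquad \lambda_1 = f + \rho f_\rho,
\end{equation*}
obtained by expanding $(\rho f)_x = \lambda_1 \rho_x + \rho f_u u_x$. For (1), I define the equilibrium curve $\phi(\rho)$ implicitly by $f(\rho,\phi(\rho)) = \phi(\rho)$; since $f_u \leq 0$ makes $u \mapsto f(\rho,u)-u$ strictly decreasing, the implicit function theorem yields $\phi$ uniquely and smoothly. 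Along a $u$-characteristic $\dot u = \rho(f(\rho,u)-u)$, and at any spatial maximum $x^*$ of $u$ with $u(x^*) > \max_\rho \phi(\rho)$ one has $u(x^*) > \phi(\rho(x^*))$, so $f_u \leq 0$ gives $f(\rho,u)|_{x^*} < \phi(\rho(x^*)) < u(x^*)$, hence $\dot u(x^*) < 0$. This yields the upper bound; the lower bound is symmetric.

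For (2), $e \equiv 0$ is a stationary solution of the ODE $\dot e = -e(e-\rho)$ along $u$-characteristics, so $e \geq 0$ is propagated from $e_0 \geq 0$. For the upper bound, I track $e_\infty(t) := \sup_x e(t,\cdot)$ and $\rho_\infty(t) := \sup_x \rho(t,\cdot)$ via an envelope argument: at a spatial maximum of $e$, $e_x = 0$ gives
\begin{equation*}
\tfrac{d}{dt} e_\infty = -e_\infty(e_\infty - \rho(x^*)) \leq 0 \quad\text{whenever } e_\infty \geq \rho_\infty,
\end{equation*}
and symmetrically $\tfrac{d}{dt}\rho_\infty = \rho_\infty f_u(\rho_\infty - e(x^*)) \leq 0$ whenever $\rho_\infty \geq e_\infty$ (using $f_u \leq 0$). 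Hence $\max(e_\infty,\rho_\infty)$ is non-increasing, which gives $0 \leq \rho, e \leq M := \max(\sup\rho_0,\sup e_0)$ and therefore $-\rho \leq u_x \leq M$.

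For (3), part (2) bounds $\rho$ and $u_x$, so by Theorem \ref{local2} it remains to bound $\rho_x$ (equivalently $u_{xx} = e_x - \rho_x$). Differentiation in $x$ yields
\begin{equation*}
\dot{\rho_x}\big|_{\lambda_1} = -(\rho f)_{\rho\rho}\rho_x^2 - 2(f_u + \rho f_{\rho u})u_x\rho_x + \rho f_u \rho_x - \rho f_{uu} u_x^2 - \rho f_u e_x,
\end{equation*}
\begin{equation*}
\dot{e_x}\big|_u = -(3e - 2\rho)e_x + e\rho_x .
\end{equation*}
In the convex case, at $\rho_x = 0$ the first right-hand side reduces to $-\rho f_{uu}u_x^2 - \rho f_u e_x \geq 0$ whenever $e_x \geq 0$, and at $e_x = 0$ the second reduces to $e\rho_x \geq 0$ whenever $\rho_x \geq 0$. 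Thus the initial signs $\rho_{0x} \geq 0$ and $e_{0x} = u_{0xx}+\rho_{0x} \geq 0$ are propagated jointly. With both signs fixed, the damping $-(\rho f)_{\rho\rho}\rho_x^2 \leq 0$ controls $\rho_x$, and a coupled Gronwall/Riccati argument on $(\rho_x,e_x)$, using the boundedness of all coefficients from (2), yields uniform bounds; Theorem \ref{local2} then extends the solution globally. The concave case is symmetric under reversal of all relevant signs.

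For (4), pick $x_0$ with $e_0(x_0) < 0$ and trace the $u$-characteristic from $(0,x_0)$. Along it $\dot e = e(\rho - e)$; with $\rho \geq 0$ and $e < 0$, one has $\rho - e \geq -e$, so $\dot e \leq -e^2$ and $\tfrac{d}{dt}(1/e) \geq 1$. Hence $1/e$ crosses zero by time $t_c \leq 1/|e_0(x_0)|$, giving $e \to -\infty$ at rate $O(1/(t_c-t))$; since $u_x = e - \rho \leq e$, the same holds for $u_x$. The principal obstacle I anticipate is step (3): joint sign preservation is a clean one-line check, but closing the coupled $L^\infty$ estimate on $(\rho_x,e_x)$ is delicate, because the $e_x$ equation has no quadratic self-damping and is driven linearly by $\rho_x$, so one must simultaneously exploit the quadratic damping in the $\rho_x$ equation and the uniform bounds on $e,\rho,u_x$ from (2), along two distinct characteristic families.
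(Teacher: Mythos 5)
Your proposal is correct and takes essentially the same approach as the paper: the equilibrium curve $\phi$ (Lemma \ref{impfuncth}), the invariant box $[0,M]$ for $(\rho,e)$ (Proposition \ref{erhobound2}), sign propagation for $(\rho_x,e_x)$ under the convexity hypotheses followed by a bound on $\rho_x$ (Lemma \ref{etabound}, Proposition \ref{xibound}), and the Riccati blow-up of $e$ along a characteristic for finite-time breakdown. Your minor variants --- the implicit function theorem in place of the paper's contraction mapping for $\phi$, a sup-envelope rather than comparison along characteristics for the $[0,M]$ bound, and an explicit coupled Gronwall estimate on $(\rho_x,e_x)$ in the spirit of Lemma \ref{linearrhs} to close step (3) --- are all sound and, if anything, make the closing of step (3) more explicit than the paper's argument.
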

\begin{remark}
It is worth noting that the additional conditions for global existence in assertion (3) of Theorem \ref{thm3} are not sharp, and we do not know if they are also necessary for global existence.
\end{remark}

Since the solutions persist for as long as one can place a priori bounds on the solutions and their first order derivatives, our analysis is carried out on the already existing classical local solutions. 

For strictly hyperbolic systems, we have the technique of Riemann invariants whereas weakly hyperbolic systems can be reduced to a system of ODEs along a single characteristic path. Owing to the structure of \eqref{matsysgen}, we will blend the two techniques to obtain results on critical thresholds. 


\section{Proofs of results for $f = f(u)$ case}
\label{frho0}
In the case that  $f$ depends on velocity only, the system \eqref{matsysgen} becomes
\begin{align}	
\label{matsys}
\left[
\begin{array}{c}
\rho\\
u
\end{array}
\right]_t +
\left[
\begin{array}{cc}
 f & \rho f_u \\
0 & u
\end{array}
\right]
\left[
\begin{array}{c}
\rho\\
u
\end{array}
\right]_x = 
\left[
\begin{array}{c}
0\\
\rho(f-u)
\end{array}
\right],
\end{align}
with $\lambda_1 = f(u)$ and $\lambda_2 = u$.

We first derive bounds on $u$ under some structural conditions on $f$. 
We assume that the set $\Sigma := \{ u: f(u)=u\} $ is nonempty, finite and its two endpoints are stable critical points, i.e., $f_u<1$, then $u$ can be shown uniformly bounded as long as $u_0(x)$ is. More precisely, we state the following result.
\begin{proposition}[Bounds on $u$]
\label{ubound}
Let $u_1^\ast$ and $u_N^\ast$ be the smallest and largest elements of $\Sigma$ respectively. Then as long as $\rho$ in \eqref{matsys} is bounded, we have the following:\\
If $f_u (u_1^\ast)<1$ and $f_u(u_N^\ast)<1$, then 
$$
\min\{\inf u_0, u_1^\ast\}\leq u(t,\cdot)\leq \max\{\sup u_0,u_N^\ast\} .
$$
In particular, if $\Sigma$ has exactly one element, $u^\ast$ and $f_u(u^\ast)<1$ then
\begin{align}
\label{usingleroot}
\min\{\inf u_0, u^\ast\}\leq u(t,\cdot)\leq \max\{\sup u_0,u^\ast\}. 
\end{align}
\end{proposition}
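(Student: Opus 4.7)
The plan is to reduce the bound on $u$ to an ODE analysis along the second characteristic field $\lambda_2 = u$, and read off the sign of $g(u) := f(u) - u$ from the outermost roots $u_1^\ast$ and $u_N^\ast$ of $g$. Concretely, the second row of \eqref{matsys} gives
\[
\frac{d}{dt}\, u(t, X(t)) = \rho(t, X(t))\,(f(u) - u) = \rho\, g(u)
\]
along the characteristic $\dot X(t) = u(t, X(t))$, $X(0) = x_0$, which is well-defined on the interval of $C^1$ existence. Non-negativity of $\rho$ propagates from the initial data: writing \eqref{maina} along its own characteristic $\dot Y = f(u(t, Y))$ yields $\dot\rho = -\rho f_u(u)\, u_x$, which is linear in $\rho$ and hence preserves $\rho \geq 0$. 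Therefore the sign of $\dot u$ along each $u$-characteristic is controlled solely by $g(u)$.

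Next I would pin down the sign of $g$ outside $[u_1^\ast, u_N^\ast]$. The hypothesis $f_u(u_N^\ast) < 1$ is exactly $g'(u_N^\ast) < 0$; together with $g(u_N^\ast) = 0$, this forces $g < 0$ on some right neighborhood of $u_N^\ast$. Since $u_N^\ast = \max \Sigma$, the continuous function $g$ has no further zero on $(u_N^\ast, \infty)$, so by the intermediate value theorem $g(u) < 0$ for all $u > u_N^\ast$. A symmetric argument using $f_u(u_1^\ast) < 1$ gives $g(u) > 0$ for all $u < u_1^\ast$.

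With this sign information, the upper bound follows from a forward-invariance argument on each characteristic. Put $U^\ast := \max\{\sup u_0, u_N^\ast\}$. If $u_0(x_0) \leq u_N^\ast$, I claim $u(t, X(t)) \leq u_N^\ast$ for all $t$ in the lifespan; otherwise there would be a first time $t_1$ with $u(t_1, X(t_1)) = u_N^\ast$ followed by $u > u_N^\ast$ on some $(t_1, t_2]$, but on that interval $\dot u = \rho g(u) \leq 0$, forcing $u(t_2) \leq u(t_1) = u_N^\ast$, a contradiction. If instead $u_0(x_0) > u_N^\ast$, then $\dot u \leq 0$ as long as $u > u_N^\ast$, so $u$ is non-increasing while above $u_N^\ast$ and stays bounded by $u_0(x_0) \leq \sup u_0 \leq U^\ast$. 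Either way $u(t, X(t)) \leq U^\ast$, and letting $x_0$ range over $\mathbb{R}$ gives the pointwise bound. The lower bound is proved identically using $g \geq 0$ on $(-\infty, u_1^\ast]$, and the specialization \eqref{usingleroot} is just $u_1^\ast = u_N^\ast = u^\ast$.

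The only genuine subtlety is the global-in-$u$ sign statement for $g$ outside $[u_1^\ast, u_N^\ast]$: the strict assumption $f_u < 1$ at the extremal roots (rather than merely $f_u \leq 1$) is what rules out a tangential zero at $u_N^\ast$ from which $g$ could re-enter the positive regime farther out. The boundedness of $\rho$ is used only to guarantee that the $C^1$ solution persists and that the characteristic $X(t)$ is well-defined on the time interval in question; the bounds on $u$ themselves are independent of $\|\rho\|_\infty$.
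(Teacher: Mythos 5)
Your proof is correct and follows essentially the same route as the paper: reduce to the ODE $\dot u=\rho\,(f(u)-u)$ along the $u$-characteristic, use $\rho\geq 0$, and exploit $f_u<1$ at the extremal roots to fix the sign of $f(u)-u$ outside $[u_1^\ast,u_N^\ast]$. The only (harmless) difference is that you justify invariance of the region via a first-crossing/monotonicity argument, whereas the paper invokes uniqueness for the ODE at the critical points of $\Sigma$.
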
 

\begin{remark}
The proposition states bounds on $u$ if $\Sigma$ has finitely many elements. However, if $\Sigma$ has infinitely many elements, then to bound $u$, we do not require the assumption of stability of the endpoints. This fact will be evident in the proof of the Proposition itself. 
For example, if $f(u) = u + \sin u$ then $\Sigma = \{\ldots, -\pi,0,\pi,\ldots \}$. In this case, $u$ will be uniformly bounded and lie in between two distinct elements of $\Sigma$ for all time. 
\end{remark}

For $\rho \geq 0$ bounded, the bound of $u$ can be obtained by considering the velocity equation \eqref{matsys} alone. By method of characteristics, along the path 
\begin{align}
\label{chpathu}
& \frac{dX}{dt} = u(t,X),\quad X(0) = x_0\in\mathbb{R}, 
\end{align}
the resulting ODE 
\begin{align}
\label{ch1}
\frac{d}{dt}u(t,X(t)) = \rho (t,X(t)) (f(u(t,X(t))) - u(t,X(t)) )
\end{align}
will serve as the key to proving the Proposition. Hereafter, we will write $X(t;x_0)$ as $X$ for the sake of simplicity.

\textit{Proof of Proposition \ref{ubound}:}
Firstly, note that elements of $\Sigma$ are critical points of \eqref{ch1}. Hence, by uniqueness of solution of ODEs, for any initial point $x_0$ with $u_{1}^\ast \leq u_0(x_0) \leq u_N^\ast$ then,
\begin{align}
\label{uproof1}
& u_{1}^\ast\leq u(t,X)\leq u_N^\ast,\ t\geq 0.
\end{align}
Now if the other condition holds, i.e., $u_0(x_0)>u_N^\ast$ (similar argument applies for $u_0(x_0)<u_1^\ast$ to get the lower bound) then 
since $u_N^\ast$ is the largest zero of $f(u)-u$ and $f_u (u_N^\ast )<1$ so, $f(u_0(x_0))-u_0(x_0) <0$. See Figure \ref{fig1}. 

\begin{figure}[h!]
\centering
\subfigure{\includegraphics[width=0.4\linewidth]{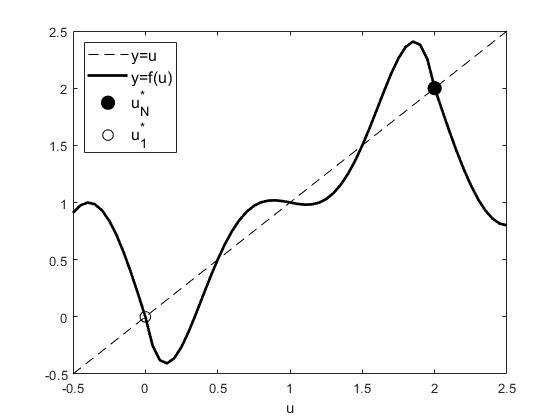}}
\caption{Terminal roots of $f(u)-u=0$.}
\label{fig1}
\end{figure}

And since $u_N^\ast$ is a critical point, we have
$$
f(u(t,X))-u(t,X) < 0, \ t\geq 0.  
$$
Because if not, then $f(u(t,X)) = u(t,X) = u_N^\ast$ for some $t>0$ which is a contradiction. 
And hence, for such $x_0$, we have from \eqref{ch1} that
\begin{align*}
& \frac{d}{dt}u(t,X) = \rho (t,X) (f(u(t,X)) - u(t,X) ) \leq 0.
\end{align*}
Therefore, $u(t,X)\leq u_0(x_0)$. Combining this with \eqref{uproof1} and collecting all characteristics, we obtain
$$
u(t,\cdot) \leq\max\{ \sup u_0, u_N^\ast\}.
$$
\qed


Next, we bound $\rho$. We motivate ourselves by the fact that $\rho$ behaves differently for strictly and weakly hyperbolic systems. In the former, there is generally no density concentration, however, that is not the case for the latter. Therefore, the bound on $\rho$ we derive depends on whether $u$ will cross a point in $\Sigma$. The following Lemma demarcates the two cases of $u$ crossing a point in $\Sigma$ or not based on the initial conditions.
\begin{lemma}
\label{invreg}
If $ |f(u_0(x))-u_0(x)|>0$ for all $x\in\mathbb{R}$, then 
$$
 |f(u(t,\cdot)) - u(t,\cdot)| > 0,\quad t\geq 0.
$$
In particular,
$$ 
\sign (f(u_0) - u_0 ) = \sign (f(u(t,\cdot)) - u(t,\cdot) ),\quad  t\geq 0.
$$
\end{lemma}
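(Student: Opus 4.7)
The plan is to fix a characteristic $X(t;x_0)$ defined by \eqref{chpathu} and track the quantity $g(t) := f(u(t,X(t))) - u(t,X(t))$ along it. Since $u$ is a classical solution and $f$ is smooth, $g$ is a $C^1$ function of $t$, and differentiating with respect to $t$ along the characteristic and using \eqref{ch1} gives
\begin{equation*}
\frac{dg}{dt} = (f_u(u(t,X)) - 1)\,\frac{du}{dt}\bigg|_{\text{along } X} = (f_u(u(t,X)) - 1)\,\rho(t,X)\,g(t).
\end{equation*}
This is a linear homogeneous ODE in $g$ along the characteristic with integrable coefficient (since $\rho$ and $f_u(u)$ are bounded on the time interval of existence of the classical solution), so Gr\"onwall / the explicit integrating factor gives
\begin{equation*}
g(t) = g(0)\,\exp\!\left(\int_0^t (f_u(u(s,X(s))) - 1)\,\rho(s,X(s))\,ds\right).
\end{equation*}

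From this representation both conclusions follow immediately. The exponential factor is strictly positive, so $g(t)$ never vanishes when $g(0) \neq 0$, and $\operatorname{sgn}(g(t)) = \operatorname{sgn}(g(0))$. Since $g(0) = f(u_0(x_0)) - u_0(x_0)$ and, by hypothesis, this quantity is nonzero for every $x_0 \in \mathbb{R}$, we obtain $f(u(t,X(t;x_0))) - u(t,X(t;x_0)) \neq 0$ with sign equal to $\operatorname{sgn}(f(u_0(x_0)) - u_0(x_0))$ for all $t \geq 0$.

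To finish, I would note that the characteristic map $x_0 \mapsto X(t;x_0)$ from \eqref{chpathu} is, under the local existence theorem, a $C^1$-diffeomorphism of $\mathbb{R}$ onto itself for each $t$ within the lifespan of the classical solution (standard ODE theory: $u(t,\cdot)$ is Lipschitz, hence the flow is a homeomorphism, and the Jacobian satisfies a linear ODE forced by $u_x$, so it stays finite and nonzero). Hence every $x \in \mathbb{R}$ is reached by some characteristic starting at $x_0(x,t)$, and the pointwise statement at $(t,x)$ reduces to the ODE statement above with $x_0 = x_0(x,t)$, proving both the non-vanishing of $f(u) - u$ and preservation of its sign.

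I do not foresee any real obstacle: the main subtlety is making sure the ODE for $g$ is genuinely linear along characteristics (which is what the chain rule combined with \eqref{ch1} automatically gives) and that the characteristics cover all of $\mathbb{R}$ at each time, both of which are routine.
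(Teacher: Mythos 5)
Your proposal is correct and is essentially the paper's own argument: the paper likewise sets $G(t) := f(u(t,X)) - u(t,X)$, derives $G_t = (f_u-1)\rho\, G$ from \eqref{ch1}, and concludes sign preservation from the explicit exponential representation along every characteristic. Your extra remark about the characteristic flow covering $\mathbb{R}$ is a harmless elaboration of what the paper leaves implicit.
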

\begin{proof}
Without loss of generality assume $ f(u_0(x))-u_0(x)<0$ for all $x\in\mathbb{R}$. For a fixed $x_0\in\mathbb{R}$, set $G(t) :=f (u(t,X)) - u(t,X)$. Using \eqref{ch1}, we have
\begin{align*}
 \frac{dG}{dt} & = (f_u(u(t,X)) - 1)\frac{d}{dt}u(t,X)\\
& =(f_u(u(t,X)) - 1)  \rho (t,X) G,
\end{align*} 
which in turn gives $G(t) = G(0)e^{\int_0^t \rho(\tau, X(\tau )) \left(f_u (u(\tau, X(\tau))-1\right)d\tau}$. Therefore, 
$$
\sign (G(t)) = \sign (G(0)),
$$ 
along the path $(t,X)$ with initial point $(0,x_0)$. This argument applies for all $x_0\in \mathbb{R}$ which completes the proof. 
\end{proof}

Owing to Lemma \ref{invreg}, we have that $\{u: f(u)=u\}$ is an invariant set. Hence 
for initial data satisfying $|f(u_0(x))-u_0(x)|>0$ for all $x\in\mathbb{R}$, the system \eqref{matsys} is strictly hyperbolic for all $t>0$.   
We will obtain bounds on $\rho$ in terms of $u$ using Riemann invariant. From \eqref{matsys}, we have that the Riemann invariant corresponding to $\lambda_2$ is $u$ trivially. Let the Riemann invariant corresponding to $\lambda_1$ be $R=R(\rho ,u)$ such that  
\begin{align*}
& R_t + \lambda_1 R_x = \rho R_u (f-u).
\end{align*}
It suffices to find an appropriate integrating factor $\phi$  such that 
\begin{align*}
& [R_\rho\quad R_u] = \phi [ f-u \quad \rho f_u ].
\end{align*} 
The right hand side being the left eigenvector of coefficient matrix in \eqref{matsys} corresponding to $\lambda_1$. 
If we assume $\phi = \phi(u)$, then setting $R_{\rho u} = R_{u\rho}$, we obtain,
$$
\phi_u (f-u) + \phi(f_u -1) = \phi f_u,
$$
which results in
$$
\frac{d\phi}{\phi} = \frac{du}{f-u}.
$$
By integrating, one obtains $\phi(u) = e^{\int_{u_0}^u \frac{d\xi}{f(\xi)-\xi}}$ is a valid integrating factor. Here, the requirement  of strict hyperbolicity is used. It ensures that $\phi$, and in turn $R$, is well defined. Consequently,
\begin{align}
\label{R1}
R= \rho \phi (f-u) = \rho e^{\int_{u_0}^u \frac{d\xi}{f(\xi)-\xi}} (f-u).
\end{align}
The complete invariant system is
\begin{subequations}
\label{invsys}
\begin{align}
& R_t + f R_x = \rho^2 \phi f_u (f-u) = \rho Rf_u, \label{invsys1} \\
& u_t + u u_x = \rho (f-u). \label{invsys2}
\end{align}
\end{subequations}
For later use we set $R_0(x):=R(\rho_0(x),u_0(x))$ as initial data for $R$. 

We state bounds on $\rho$ in the following proposition.
\begin{proposition}
\label{rhobound}
Let $|f(u_0(x))-u_0(x)|>0$ for all $x\in\mathbb{R}$ and $f_u\leq 0$. Then
$$
\rho(t,x)\leq \frac{\sup\rho_0|f(u_0)-u_0|}{e^{\int_{u_0(x)}^{u(t,x)}\frac{d\xi}{f(\xi)-\xi}}|f(u(t,x))-u(t,x)|}.
$$
\end{proposition}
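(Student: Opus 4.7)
The plan is to exploit the Riemann invariant $R = \rho\,\phi(u)(f-u)$ constructed in the paragraph just preceding the proposition, integrating the ODE it satisfies along the first characteristic $\dot X_1 = f(u(t,X_1))$. Since $|f(u_0)-u_0|>0$ everywhere, Lemma~\ref{invreg} gives $|f(u(t,\cdot))-u(t,\cdot)|>0$ for all $t>0$, with constant sign; hence the integrand $1/(f(\xi)-\xi)$ appearing in $\phi$ stays regular along trajectories, and the system remains strictly hyperbolic so the Riemann invariant framework is legitimate.

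Concretely, I would fix a point $(t,x)$, let $x_0$ be the foot of the first characteristic through $(t,x)$, and choose the reference point in the integrating factor to be $u_0(x_0)$, i.e.
\[
\phi(u) = \exp\!\left(\int_{u_0(x_0)}^{u}\frac{d\xi}{f(\xi)-\xi}\right),
\]
which is permissible because $\phi$ is only determined up to a multiplicative constant. With this choice, $\phi(u_0(x_0))=1$, so $R(0,x_0)=\rho_0(x_0)\,(f(u_0(x_0))-u_0(x_0))$. Along $X_1$ equation \eqref{invsys1} becomes the scalar ODE
\[
\frac{d}{dt}R(t,X_1(t)) = \rho(t,X_1(t))\,f_u(u(t,X_1(t)))\,R(t,X_1(t)),
\]
which integrates to
\[
R(t,x) = R(0,x_0)\,\exp\!\left(\int_0^t \rho\,f_u\,d\tau\right).
\]

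Now the sign hypotheses do the work: $\rho\geq 0$ and $f_u\leq 0$ make the exponential factor $\leq 1$, and Lemma~\ref{invreg} ensures $R$ keeps the sign of $R(0,x_0)$, so $|R(t,x)|\leq |R(0,x_0)|$. Unpacking the definition of $R$ on the left-hand side and dividing by $\phi(u(t,x))\,|f(u(t,x))-u(t,x)|$ yields
\[
\rho(t,x)\leq \frac{\rho_0(x_0)\,|f(u_0(x_0))-u_0(x_0)|}{\exp\!\left(\int_{u_0(x_0)}^{u(t,x)}\frac{d\xi}{f(\xi)-\xi}\right)\,|f(u(t,x))-u(t,x)|}.
\]
Bounding the numerator by $\sup_{y}\rho_0(y)|f(u_0(y))-u_0(y)|$ gives the claimed estimate.

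The one spot requiring mild care is the denominator: since $f(\xi)-\xi$ does not vanish on the arc from $u_0(x_0)$ to $u(t,x)$ (by the sign preservation in Lemma~\ref{invreg} applied along the whole trajectory), the integral defining the exponential is a well-defined real number, and both $\phi$ and $R$ are genuinely $C^1$ objects rather than formal expressions. Beyond that, every step is a direct consequence of the Riemann-invariant ODE and the two structural signs $\rho\geq 0$ and $f_u\leq 0$, so I do not anticipate a serious obstacle; the main care is to keep track of the characteristic foot $x_0$ and the freedom in choosing the reference point of $\phi$ so that the constant-free form of the stated bound comes out cleanly.
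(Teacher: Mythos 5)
Your route is genuinely different from the paper's. You integrate the linear ODE that the Riemann invariant satisfies along the $\lambda_1$-characteristic and use $\rho\ge 0$, $f_u\le 0$ to conclude $|R(t,x)|\le |R(0,x_0)|$. The paper never follows characteristics backward: it runs an Eulerian barrier argument on $\tilde R=R+\epsilon t$ (at a first touching point one has $\tilde R_x=0$, $\tilde R_t\le 0$, contradicting \eqref{invsys1} because $\rho f_u\le 0$ makes the source term there $\ge\epsilon$), which yields the global lower bound $R\ge \inf_x R_0(x)$, and then unpacks \eqref{R1}. Your Lagrangian argument is sound as far as it goes: the integrability of $1/(f(\xi)-\xi)$ over the arc between $u_0(x_0)$ and $u(t,x)$ is indeed guaranteed, since every value on that arc is attained by $u$ along the characteristic, so Lemma \ref{invreg} (or, more simply, the monotonicity of $f(u)-u$ forced by $f_u\le 0$) keeps the integrand bounded; and your version even retains the extra decay factor $e^{\int_0^t\rho f_u\,d\tau}$ that the paper's comparison discards.

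There is, however, a concrete mismatch at the final step. What your computation proves is
\begin{equation*}
\rho(t,x)\le \frac{\sup \rho_0|f(u_0)-u_0|}{e^{\int_{u_0(x_0)}^{u(t,x)}\frac{d\xi}{f(\xi)-\xi}}\,\big|f(u(t,x))-u(t,x)\big|},
\end{equation*}
with the lower limit of integration at the characteristic foot $x_0=x_0(t,x)$, whereas the proposition's bound has the lower limit $u_0(x)$ at the same Eulerian point. The two right-hand sides differ by the factor $e^{\int_{u_0(x)}^{u_0(x_0)}\frac{d\xi}{f(\xi)-\xi}}$, which is neither $\le 1$ nor $\ge 1$ in general (its exponent's sign depends on whether $u_0(x_0)$ lies above or below $u_0(x)$), so the closing sentence ``bounding the numerator \dots gives the claimed estimate'' does not deliver the statement as written: you have proved a Lagrangian-reference variant of it. The source of the discrepancy is precisely your (otherwise legitimate) choice to pin the constant of $\phi$ at $u_0(x_0)$, a constant that changes with the target point; the paper's Eulerian maximum-principle route avoids any characteristic foot because the comparison is with $\inf_x R_0(x)$ taken over all of $\mathbb{R}$. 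To recover the paper's exact statement you would either adopt that global argument or add a step controlling the factor $e^{\int_{u_0(x)}^{u_0(x_0)}\frac{d\xi}{f(\xi)-\xi}}$. (In fairness, the paper's own bookkeeping of the reference point is loose at this very spot — with an $x$-dependent lower limit the quantity in \eqref{R1} no longer satisfies \eqref{invsys1} exactly — but judged as a proof of the proposition as stated, your last identification is the step that needs repair.)
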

\begin{remark}
\label{rhoasymp}
Note that the right hand side blows up as $u\to u^\ast$ for $u^\ast\in\Sigma$. This shows that the region where $\rho$ is bounded is asymptotic to the line $u=u^\ast$ for $u^\ast\in\Sigma$. See Figure
\ref{fig2}.
\end{remark}

\begin{figure}[h!] 
\centering
\subfigure{\includegraphics[width=0.4\linewidth]{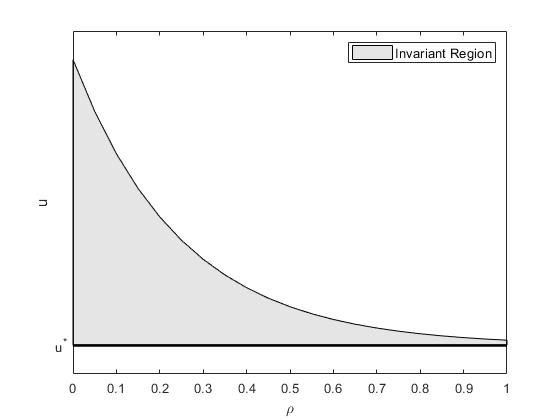}}
\caption{Asymptotic invariant region.}
\label{fig2}
\end{figure}

\textit{Proof of Proposition \ref{rhobound}:}
Without loss of generality we assume $f(u_0)-u_0<0$ for all $x\in\mathbb{R}$. Hence $R_0 (x)<0$ for all $x\in\mathbb{R}$. Fix an $\epsilon >0$ and set $\tilde R(t,x) : = R(\rho(t,x),u(t,x))+\epsilon t$. We will first show that if for some $\alpha>0$, $R_0 >-\alpha$, then 
\begin{align}
\label{tempo1}
\tilde R> -\alpha,\quad for\ t>0.
\end{align}
By way of contradiction, we assume that for the first time, $t_*$, $\tilde R(t_*,x_*) = -\alpha$ for some $x_*\in\mathbb{R}$. Therefore, $\tilde R_t(t_*,x_*)\leq 0 $. Also note that at this point $\tilde R_x = 0$ because if not then there exists some $x_1$ in the neighborhood such that $\tilde R (t_*,x_1) <-\alpha $, which contradicts that this is the first time of violation. From \eqref{invsys1}, we obtain
$$
\tilde R_t + f\tilde R_x = \rho\tilde Rf_u + \epsilon (1-\rho f_u t).
$$
At $(t_*,x_*)$,
\begin{align*}
\tilde R_t & = -\rho(t_*,x_*)\alpha f_u(u(t_*,x_*)) +  \epsilon \left[ 1-\rho(t_*,x_*) f_u (u(t_*,x_*)) t_*\right] \geq \epsilon.
\end{align*}
This is a contradiction. This proves \eqref{tempo1}. Plugging back the expression for $\tilde R$ in \eqref{tempo1}, we have that if for some $\alpha >0$, $R_0>-\alpha$, then
$$
R > -\alpha -\epsilon t, \quad t> 0.
$$
Since $\epsilon$ is arbitrary, we have $R\geq-\alpha$ is an invariant region in the $\rho-u$ plane. Consequently, we obtain
\begin{align}
\label{Rinv}
& R\geq \inf R_0.
\end{align}
Plugging \eqref{R1} above finishes the proof.\qed  

\begin{remark}
We point out that by making the assumption $f_u \leq 0$, we impose that $\Sigma$ has at most one element in the domain of consideration. From our earlier assumption of nonemptiness, we implicitly impose that $f(u) - u$ has exactly one zero in the domain of consideration.
\end{remark}


Proposition \ref{rhobound} assumes that the initial data does not cross $\{u: f(u) = u\}$. In such a case, 
one could bound $u_x$ along with $R_x$ (derivatives of Riemann invariants). 
For general initial data $u_0$ that might cross $\{u: f(u) = u \}$,   
the usual existing technique of bounding derivatives of solutions ($\rho ,u$) using Riemann invariants
is no longer applicable.  We will now show that a bound on density and $u_x$ can be obtained together through an argument by taking advantage of the structure of \eqref{matsys}. 
 Moreover, the bounds obtained are uniform. 

We proceed to take derivative of second equation in \eqref{matsys} with respect to $x$ to  obtain,
\begin{align*}
& u_{xt} + u u_{xx} = -u_x^2 + (\rho f)_x - u\rho_x - u_x\rho.  
\end{align*}
This when combined with the $\rho$ equation in \eqref{matsys}, i.e., $(\rho f)_x=-\rho_t $, 
results in
\begin{align*}
& (u_{x}+ \rho)_t + u (u_{x}+\rho)_x = -u_x(u_x + \rho).  
\end{align*}
Set 
$$
e(t, x) :=u_x (t,x) + \rho(t,x)
$$ 
with $e_0(x) = u_{0x}(x) + \rho_0(x)$, 
we thus obtain the following coupled system,
\begin{subequations}
\label{coupleerho}
\begin{align}
& \rho_t + f\rho_x = f_u\rho(\rho - e), \label{coupleerho1}\\
& e_t + u e_x = -e(e-\rho). \label{coupleerho2}
\end{align}	
\end{subequations}
This coupled system will enable us to prove the following result.
\begin{proposition}
\label{erhobound1}
Let $f_u\leq 0$. If $\rho_0 (x), e_0(x)\geq 0$ for all $x\in\mathbb{R}$  with $M = \max\{ \sup \rho_0 ,\sup e_0 \}$, then $\rho, e\in [0, M]$ for all further times.  \\
 Moreover, if $e_0(x^\ast ) <0 $ for some $x^\ast$, then $\exists x_c, t_c>0$ such that $\lim_{t\to t_c^-} e(t,x_c) = -\infty$. 
\end{proposition}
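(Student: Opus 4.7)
I would prove both assertions through characteristic-ODE arguments on the coupled system \eqref{coupleerho}, exploiting that $\rho \equiv 0$ and $e \equiv 0$ are equilibria of the respective transport-reaction equations.

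\textbf{Nonnegativity.} Along the $f$-characteristic $\dot X = f(u(t,X))$, equation \eqref{coupleerho1} reduces to the scalar ODE $\dot\rho = f_u\,\rho(\rho - e)$, for which $\rho \equiv 0$ is a stationary solution. Uniqueness for ODEs then propagates $\rho_0(x) \geq 0$ to $\rho(t, X(t)) \geq 0$ along every such characteristic, hence $\rho \geq 0$ everywhere. The same reasoning applied to \eqref{coupleerho2} along the $u$-characteristic $\dot Y = u(t, Y)$, using that $e \equiv 0$ solves $\dot e = -e(e - \rho)$, yields $e \geq 0$ whenever $e_0 \geq 0$. Note that this argument for $\rho \geq 0$ does not use the sign of $e$; this will matter in the blowup step.

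\textbf{Uniform upper bound.} Let $R(t) = \sup_x \rho(t, \cdot)$, $E(t) = \sup_x e(t, \cdot)$, and $\Phi(t) = \max\{R(t), E(t)\}$. The goal is to show $\Phi$ is non-increasing, which gives $\Phi(t) \leq M$. Heuristically, at (or near) a point where $\rho$ attains $R$, $\rho_x$ vanishes, so \eqref{coupleerho1} reduces to $\rho_t = f_u\,\rho(\rho - e)$; when $R \geq E$ we have $\rho - e \geq R - E \geq 0$, and since $f_u \leq 0$ and $\rho \geq 0$ this gives $\rho_t \leq 0$, hence $R' \leq 0$. Symmetrically, at a max of $e$ with $E \geq R$, \eqref{coupleerho2} yields $e_t = -e(e - \rho) \leq -E(E - R) \leq 0$, so $E' \leq 0$. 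In either case $\Phi' \leq 0$, and $\Phi(t) \leq \Phi(0) = M$.

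\textbf{Finite-time breakdown.} Suppose $e_0(x^\ast) < 0$. Along the $u$-characteristic $Y(t)$ with $Y(0) = x^\ast$, \eqref{coupleerho2} becomes the scalar ODE $\dot e = -e^2 + e\rho$. Since $\rho \geq 0$ from the nonnegativity step (which does not require $e \geq 0$) and $e(0) = e_0(x^\ast) < 0$, one has $\dot e(0) < 0$; continuity then forces $e$ to remain strictly negative along $Y$, so $e\rho \leq 0$ and therefore $\dot e \leq -e^2$ throughout. Riccati comparison with $\dot y = -y^2$, $y(0) = e_0(x^\ast)$, whose solution $y(t) = e_0(x^\ast)/(1 + e_0(x^\ast)\,t)$ diverges to $-\infty$ at $t = -1/e_0(x^\ast) > 0$, then yields $e(t, Y(t)) \to -\infty$ at some $t_c \in (0, -1/e_0(x^\ast)]$, establishing the claim with $x_c = Y(t_c)$.

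The principal obstacle is the upper-bound step: because the supremum is over $\mathbb{R}$ and the two solution components propagate along different characteristic families, one cannot reduce to a single scalar ODE, and the sup may fail to be attained. This is handled in the standard way by either inserting a small perturbation $-\epsilon t$ (or $-\epsilon(1 + x^2)$) and letting $\epsilon \to 0^+$, or by using an $\epsilon$-maximizing sequence $x_n$ along which $\rho_x(t, x_n) \to 0$ since $\rho \in C^1_b$; the rest of the bound-preservation argument then goes through at the limiting rates given above.
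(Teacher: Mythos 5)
Your first and third steps coincide with the paper's proof: nonnegativity follows because $0$ is an equilibrium of the characteristic ODEs \eqref{rhoe1a}--\eqref{rhoe1b}, and the breakdown part is the same Riccati comparison $\frac{de}{dt}\le -e^2$ along \eqref{chpathu}, using only $\rho\ge 0$ (which indeed never needed $e_0\ge 0$). Those parts are fine.

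The upper bound is where you diverge from the paper, and as written it has a soft spot. Your maximum-principle argument on $R(t)=\sup_x\rho$, $E(t)=\sup_x e$, $\Phi=\max\{R,E\}$ rests on evaluating \eqref{coupleerho1}--\eqref{coupleerho2} at points where the spatial derivative vanishes; over all of $\mathbb{R}$ such points need not exist, and the repairs you propose do not obviously close the argument. An $\epsilon$-maximizing sequence with $\rho_x(t,x_n)\to 0$ can be produced at time $t$, but the Dini-derivative estimate for $R$ requires controlling $\rho(t+h,x)-\rho(t,x)$ at near-maximizers selected at time $t+h$, where the transport term $f\rho_x$ (resp.\ $u\,e_x$) is not small; and the $-\epsilon t$ barrier/first-crossing variant needs the crossing point $(t_*,x_*)$ to be attained, which again fails in general on an unbounded domain without decay. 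The paper avoids all of this by staying on characteristics: since $f_u\le 0$, $\rho\ge 0$, and (in the bootstrap regime) $e\le M$, $\rho\le M$, one gets the genuine differential inequalities $\frac{d}{dt}\rho(t,Y)\le f_u\rho\,(\rho-M)$ along \eqref{chpathf} and $\frac{d}{dt}e(t,X)\le -e\,(e-M)$ along \eqref{chpathu}, for which $M$ is a barrier, and a comparison plus continuity-in-time bootstrap gives $\rho,e\in[0,M]$ with no attainment issues. The sign observation driving your argument (at the larger of the two maxima the coupling term is favorable) is exactly the same one the paper exploits by replacing $e$ by $M$ and $\rho$ by $M$; I recommend you recast your step (2) in that characteristic form, after which the proof is complete and essentially identical to the paper's.
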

\begin{remark} Even though we allow more general initial data and obtain uniform bounds on $\rho$, we pay a price by involving the derivative of $u$. This is actually expected due to the system losing its strict hyperbolicity.
\end{remark}
\begin{proof}
Taking note of \eqref{coupleerho1}, we define another characteristic path, $(t,Y(t))$ with $Y$ as
\begin{align}
\label{chpathf}
& \frac{dY}{dt} = f(u(t,Y)),\quad Y(0) = y_0.
\end{align}
We can rewrite equations in \eqref{coupleerho}  as ODEs on the described characteristic paths $(t,Y)$ and $(t,X)$ defined by \eqref{chpathu}:
\begin{subequations}
\label{rhoe1}
\begin{align}
& \frac{d}{dt}\rho(t,Y) = f_u (u(t,Y)) \rho(t,Y)(\rho(t,Y) - e(t,Y)), \label{rhoe1a}\\
& \frac{d}{dt}e(t,X) = -e(t,X)(e(t,X)-\rho(t,X)). \label{rhoe1b}
\end{align}	
\end{subequations}
Note that $0$ is a critical point to \eqref{rhoe1a}. Therefore, if $\rho(0,x)\geq 0$, then $\rho(t,\cdot)\geq 0$ for all time. Same holds for $e(t,\cdot)$ since $0$ is a critical point of \eqref{rhoe1b} as well. 

Next, by definition of $M$, $\rho(0,x)\in [0,M]$ and $e(0,x)\in [0,M]$ for all $x\in\mathbb{R}$. We will show that $\rho(t,\cdot )$ and $e(t,\cdot )$ remains in $[0,M]$.
From \eqref{rhoe1}, we have that whenever $\rho(t,\cdot), e(t,\cdot)\in [0,M]$, then 
\begin{align*}
& \frac{d}{dt}\rho(t,Y) \leq f_u \rho(t,Y) (\rho(t,Y)-M),\\
& \frac{d}{dt}e(t,X) \leq -e(t,X)(e(t,X)-M).
\end{align*}
These inequalities hold for all points at time $t$ on both characteristic paths. Consequently, from comparison, $\rho(t,\cdot),e(t,\cdot)\in[0,M]$ for any future time.

For the second part assume $e_0(x^\ast)<0$ for some $x^\ast$. In \eqref{rhoe1b}, consider $X$ with $X(0) = x^\ast$. To make the equation look simple, we abused the notation and denoted $e(t,X),\rho(t,X)$ as $e(t),\rho(t)$ so that 
$$
\frac{de}{dt} = -e(e-\rho).
$$
Since, $e=0$ is an unstable equilibrium point, $e(0)<0$ implies that $e(t)<0$ for all $t\geq 0$. Consequently,
$$
\frac{de}{dt} \leq -e^2.
$$  
Hence, 
$$
e\leq \frac{e(0)}{1+te(0)},
$$
which implies that 
$$
\lim_{t\to t_c^-} e(t) = -\infty, 
$$
for some $t_c\leq -(e(0))^{-1}$.
\end{proof}
The tools developed uptil this point will enable us to bound $u,\rho,u_x$. Observing the second assertion of Theorem \ref{local2}, to extend the local solutions to arbitrary time, it remains to control $\rho_x$. In view of this, taking the $x$ derivative of system \eqref{coupleerho}, and denoting $\xi := \rho_x$ and $\eta := e_x$, we obtain the system,
\begin{subequations}
\label{rhoxex}
\begin{align}
& \xi_t + f \xi_x = f_u(3\rho - 2e)\xi - f_u \rho\eta - f_{uu} \rho(\rho -e)^2, \label{rhoxex1} \\
& \eta_t + u \eta_x = e\xi +  (2\rho -3e)\eta. \label{rhoxex2}
\end{align}
\end{subequations}
Note that all the coefficients on the right hand side are bounded. We leverage this fact in following Lemma which will enable us to prove boundedness of $\xi$.
\begin{lemma}
\label{linearrhs}
Consider the following system of equations
\begin{align*}
& p_t + \mu_1 p_x = a p + bq + c,\\
& q_t + \mu_2 q_x = k p + l q + m,
\end{align*}
with initial conditions $p_0,q_0$. $a,b,c,k,l,m,p_0,q_0$ all belong to $L^\infty (\mathbb{R})$. Then 
$$
||p(t,\cdot)||_\infty ,||q(t,\cdot)||_\infty \leq \beta e^{\gamma t},
$$
where $\beta = 1+ ||p_0||_\infty + ||q_0||_\infty $ and\\ 
$$
\gamma = ||a||_\infty +||b||_\infty +||c||_\infty +||k||_\infty +||l||_\infty +||m||_\infty.
$$
\end{lemma}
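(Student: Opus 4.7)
The plan is to convert the PDE system into a pair of ODEs along characteristic curves and then close via a Grönwall estimate. Specifically, I would introduce characteristic paths $X_1(t;y)$ and $X_2(t;z)$ satisfying $\frac{dX_1}{dt} = \mu_1(t,X_1)$ with $X_1(0)=y$, and $\frac{dX_2}{dt} = \mu_2(t,X_2)$ with $X_2(0)=z$. Along these paths the system becomes the ODEs
$$\frac{d}{dt} p(t,X_1(t;y)) = a p + b q + c, \qquad \frac{d}{dt} q(t,X_2(t;z)) = k p + l q + m,$$
with the right-hand sides evaluated at $(t,X_1(t;y))$ and $(t,X_2(t;z))$ respectively.

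Next I would take absolute values, dominate $|p|$ and $|q|$ pointwise by $P(s):=\|p(s,\cdot)\|_\infty$ and $Q(s):=\|q(s,\cdot)\|_\infty$, integrate from $0$ to $t$, and then take the supremum over the starting point $y$ (respectively $z$). This yields
$$P(t) \leq P(0) + \int_0^t \bigl(\|a\|_\infty P(s) + \|b\|_\infty Q(s) + \|c\|_\infty\bigr)\,ds,$$
and the analogous inequality for $Q(t)$ with norms $\|k\|_\infty, \|l\|_\infty, \|m\|_\infty$.

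Adding the two inequalities and using that each of $\|a\|_\infty+\|k\|_\infty$, $\|b\|_\infty+\|l\|_\infty$ and $\|c\|_\infty+\|m\|_\infty$ is bounded by $\gamma$, I obtain
$$P(t)+Q(t) \leq P(0)+Q(0) + \gamma\int_0^t\bigl(P(s)+Q(s)+1\bigr)\,ds.$$
Setting $S(t):=P(t)+Q(t)+1$ this becomes $S(t) \leq \beta + \gamma\int_0^t S(s)\,ds$ with $S(0)=\beta$, so Grönwall's inequality gives $S(t)\leq \beta e^{\gamma t}$; since $P(t)$ and $Q(t)$ are both dominated by $S(t)$, the stated bound follows.

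The only mildly nonobvious ingredient is the $+1$ in the definition of $S(t)$ (matched by the $+1$ in $\beta$), which absorbs the inhomogeneous source contributions $\|c\|_\infty$ and $\|m\|_\infty$ and puts the estimate into standard Grönwall form. The one regularity concern is the existence of the characteristic curves, which is automatic in the intended application to \eqref{rhoxex} where $\mu_1=f(u)$ and $\mu_2=u$ are smooth on the life span of the local solution; I do not anticipate any genuine obstacle.
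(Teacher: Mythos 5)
Your proof is correct, but it follows a genuinely different route from the paper. The paper argues in Eulerian variables with a barrier function: it sets $G(t,x) = p - \beta e^{\delta\gamma t}$ for fixed $\delta>1$, supposes a first time $(t_0,x_0)$ where $G$ vanishes, uses $G_t\geq 0$, $G_x = p_x = 0$ there together with the $p$-equation (and a ``without loss of generality'' bound on $q$ by the same barrier) to reach a contradiction, and then lets $\delta\to 1^+$. You instead integrate along the two characteristic families, pass to the sup-norms $P(s)=\|p(s,\cdot)\|_\infty$, $Q(s)=\|q(s,\cdot)\|_\infty$, add the two integral inequalities, and close with Gr\"onwall applied to $S=P+Q+1$; the $+1$ absorbing the inhomogeneous terms is exactly the role played by the $1$ in $\beta$. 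Your version treats $p$ and $q$ symmetrically in one stroke and avoids two delicate points in the paper's argument, namely that the supremum over the unbounded line is attained at an interior point and that a ``first crossing time'' exists; in exchange you need the characteristic flows of $\mu_1,\mu_2$ to exist and to be onto $\mathbb{R}$ at each time so that the supremum along characteristics equals the spatial supremum, and you need $P,Q$ to be finite and (say) continuous in $s$ to run Gr\"onwall. Both requirements are harmless in the intended application to \eqref{rhoxex}, where $\mu_1=f(u)$, $\mu_2=u$ are $C^1$ and bounded on the life span of the classical solution, so your argument is a legitimate, and arguably cleaner, alternative proof of the lemma.
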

Before proving this Lemma, we state the following corollary which is a result of a direct application of this Lemma to \eqref{rhoxex}.
\begin{corollary}
\label{rhoxbound1}
$\rho_x$ is bounded for all times with
$$
||\rho_x(t,\cdot)||_\infty \leq (1+||\rho_{0x}||_\infty+||e_{0x}||_\infty  ) e^{12(||f||_{C^2}+1)\max\{M^3,1\}t}.
$$
Here, $M$ is as in Proposition \ref{erhobound1}.
\end{corollary}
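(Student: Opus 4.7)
The plan is to apply Lemma \ref{linearrhs} directly to the system \eqref{rhoxex} with $p=\xi=\rho_x$ and $q=\eta=e_x$, and then to show that the aggregated coefficient bound $\gamma$ produced by the lemma is dominated by $12(\|f\|_{C^2}+1)\max\{M^3,1\}$. First I would read off the identifications $\mu_1=f$, $\mu_2=u$, $a=f_u(3\rho-2e)$, $b=-f_u\rho$, $c=-f_{uu}\rho(\rho-e)^2$, $k=e$, $l=2\rho-3e$, $m=0$, with initial data $p_0=\rho_{0x}$ and $q_0=e_{0x}$. The lemma will then yield $\|\rho_x(t,\cdot)\|_\infty\le \beta e^{\gamma t}$ with $\beta=1+\|\rho_{0x}\|_\infty+\|e_{0x}\|_\infty$, which already matches the prefactor in the claimed bound.

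The next step is to estimate $\gamma=\|a\|_\infty+\|b\|_\infty+\|c\|_\infty+\|k\|_\infty+\|l\|_\infty+\|m\|_\infty$. I would invoke Proposition \ref{erhobound1}, under whose hypotheses $0\le \rho(t,\cdot),\,e(t,\cdot)\le M$, so that the triangle inequality gives $|3\rho-2e|\le 5M$, $|2\rho-3e|\le 5M$, $|\rho(\rho-e)^2|\le M(2M)^2=4M^3$, and $|e|,|\rho|\le M$. Combining with $|f_u|,|f_{uu}|\le \|f\|_{C^2}$ then yields
\begin{align*}
\gamma \le 5M\|f\|_{C^2}+M\|f\|_{C^2}+4M^3\|f\|_{C^2}+M+5M+0
      \le 6M\|f\|_{C^2}+4M^3\|f\|_{C^2}+6M.
\end{align*}

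Finally I would clean up constants using the two elementary observations $M\le\max\{M^3,1\}$ (trivial for $M\ge 1$ and for $M<1$ via the upper bound $1$) and $\|f\|_{C^1}\le\|f\|_{C^2}$, to conclude
\begin{align*}
\gamma\le 10(\|f\|_{C^2}+1)\max\{M^3,1\}\le 12(\|f\|_{C^2}+1)\max\{M^3,1\},
\end{align*}
which substituted into the Lemma \ref{linearrhs} bound gives exactly the stated estimate. The one mild obstacle, if any, is ensuring that the initial data for $e_x$ enters only through $\|e_{0x}\|_\infty$; since $e_0=u_{0x}+\rho_0$ is $C^1$ under the hypotheses $(\rho_0,u_0)\in C_b^1\times C_b^2$ assumed in Theorem \ref{local2}, this quantity is finite, so the application of Lemma \ref{linearrhs} is legitimate and the corollary follows at once.
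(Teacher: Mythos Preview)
Your proof is correct and follows essentially the same approach as the paper: identify the coefficients $a,b,c,k,l,m$ from system \eqref{rhoxex}, apply Lemma \ref{linearrhs}, and bound $\gamma$ using Proposition \ref{erhobound1}. The only cosmetic differences are that the paper writes $\|f\|_{C^1}$ rather than $\|f\|_{C^2}$ for the $f_u$ terms and passes directly to the constant $12$, whereas you first obtain $10$ and then relax; also note that $|\rho-e|\le M$ (not $2M$), so $\|c\|_\infty\le M^3\|f\|_{C^2}$ suffices, though your looser bound is harmless.
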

\begin{proof}
Comparing \eqref{rhoxex} to the equations in Lemma \ref{linearrhs} with $p = \xi$ and $q = \eta$, we obtain
\begin{align*}
& a = f_u(3\rho-2e),\quad b = -f_u\rho,\quad c = -f_{uu}\rho(\rho-e)^2,\\
& k = e,\quad l = 2\rho-3e, \quad m = 0.
\end{align*}
Consequently,
\begin{align*}
\gamma & = ||a||_\infty +  ||b||_\infty + ||c||_\infty +||k||_\infty +||l||_\infty +||m||_\infty\\
& \leq 5M||f||_{C^1}+ M||f||_{C^1} + 4 M^3||f||_{C^2} + M + 5M\\
& \leq 12(||f||_{C^2} + 1)\max\{ M^3,1\}.
\end{align*}
Hence, the result.
\end{proof}
Now we prove the Lemma.\\
\textit{Proof of Lemma \ref{linearrhs}:} We will show 
$$
p(t,\cdot)\leq \beta e^{\gamma t}.
$$
The other bounds can be shown in a very similar fashion. For this, define a function,
$$
G(t,x) := p - \beta e^{\delta\gamma t},
$$
with $\delta >1$ arbitrary but fixed constant. Note that $G(0,x)<0$ for all $x\in\mathbb{R}$. We will show that $G(t,\cdot)<0$ for all times. By way of contradiction, let $(t_0,x_0)$ be the first point such that $G(t_0,x_0) =0$. Hence, $G_t(t_0,x_0)\geq 0$ and $G_x (t_0,x_0) = p_x(t_0,x_0) = 0$. Using the $p$ equation,  
$$
\left. G_t\right|_{(t_0,x_0)} = \left. a \beta e^{\delta\gamma t} + b q + c - \delta \gamma \beta  e^{\delta \gamma t}\right|_{(t_0,x_0)}.
$$
Assuming without loss of generality that $||q(t,\cdot)||_\infty \leq \beta e^{\delta \gamma t}$. For otherwise, we can carry out the same procedure on $q$. Consequently,
\begin{align*}
\left. G_t\right|_{(t_0,x_0)} & \leq ||a||_\infty \beta e^{\delta\gamma t} + ||b||_\infty ||q||_\infty + \beta e^{\delta\gamma t} ||c||_\infty - \delta\gamma\beta e^{\delta\gamma t} \\
& \leq \gamma\beta e^{\delta \gamma t_0} - \delta \gamma \beta  e^{\delta \gamma t_0} < 0,
\end{align*}
which is a contradiction. Hence, $G(t,x)<0$ for all $t,x$. Note that all the calculations hold for all $\delta >1$. Taking limit as $\delta\to 1^+$, we prove the Lemma. \qed

Next, we prove Theorem \ref{thm2} using the tools developed above.\\
\textit{Proof of Theorem \ref{thm2}:} From Proposition \ref{ubound}, $u(t,\cdot)$ is uniformly bounded.
From Proposition \ref{erhobound1}, the condition $e_0(x):= u_{0x}(0,x)+\rho_0(0,x)\geq 0$ for all $x\in\mathbb{R}$ ensures that both $\rho(t,\cdot)$ and  $e(t, \cdot)=(u_x+\rho)(t,\cdot)$ are uniformly bounded. Using this, we have
\begin{align*}
 |u_x| & \leq |u_x + \rho| + \rho\\
 & \leq  2 \max\{\sup (u_{0x} + \rho_0), \sup\rho_0  \}.
\end{align*}   
Also, from Corollary \ref{rhoxbound1}, we have that $\rho_x$ is bounded for all times.
Therefore, from Theorem \ref{local2}, we have the existence of global-in-time solutions.

To prove the converse, we let $x^*\in\mathbb{R}$ with $e_0(x^*)<0$.
Then from Proposition \ref{erhobound1}, $e = u_x+\rho\to-\infty$ in finite time. Since $\rho(t,\cdot)\geq 0$, we conclude $u_x\to-\infty$ in finite time and $u$ loses $C^1$ smoothness.
This concludes the proof to Theorem \ref{thm2}. \qed

Now we will show that assuming the strict hyperbolicity of \eqref{matsys}, it can be more conveniently shown that $\rho_x$ is bounded. The key here is to use the Riemann invariant technique which was not available to us in the ansatz of the hypothesis of Theorem \ref{thm2}.
 More precisely, we have the following Proposition,
\begin{proposition}
\label{rhoxbound}
Let $f_u\leq 0$ and $|f(u_0(x))-u_0(x)|>0$ for all $x\in\mathbb{R}$. Given $e_0(x)\geq 0$ for all $x\in\mathbb{R}$ then $\rho_x$ is bounded for all time.
\end{proposition}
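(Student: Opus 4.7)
The plan is to leverage strict hyperbolicity to introduce the Riemann invariant $R$ from \eqref{R1}, express $\rho_x$ algebraically in terms of $R_x$, and then derive a scalar transport equation for $R_x$ whose right-hand side is linear in $R_x$ with coefficients bounded on every finite time interval.

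First I would collect the a priori bounds already established. Remark \ref{mixedhyperrem} together with Proposition \ref{ubound} gives $u$ uniformly bounded. Since $e_0\geq 0$, Proposition \ref{erhobound1} yields uniform bounds on both $\rho$ and $e=u_x+\rho$, whence $u_x$ is uniformly bounded too. By Lemma \ref{invreg} together with the quantitative identity
\begin{align*}
(f(u)-u)(t,X) = (f(u_0)-u_0)(x_0)\exp\!\left(\int_0^t (f_u-1)\rho\,d\tau\right)
\end{align*}
along the characteristic $X$ from \eqref{chpathu}, $|f-u|$ admits a strictly positive lower bound on each finite interval $[0,T]$; consequently $\phi(u)$ stays bounded above and below on $[0,T]$, and by Proposition \ref{rhobound}, $R$ is uniformly bounded.

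Next, starting from $\rho = R/(\phi(u)(f(u)-u))$ and using $\phi_u = \phi/(f-u)$, a direct computation yields the algebraic identity
\begin{align*}
\rho_x = \frac{R_x}{\phi(f-u)} - \frac{\rho f_u u_x}{f-u},
\end{align*}
so it suffices to bound $s := R_x$. Differentiating \eqref{invsys1} in $x$, substituting this identity for $\rho_x$, and using both $R/(\phi(f-u))=\rho$ and $R=\rho\phi(f-u)$ to cancel the apparent singularity $1/(f-u)$, I expect to arrive at the transport equation
\begin{align*}
s_t + f(u)\,s_x = \bigl(2\rho f_u - f_u u_x\bigr)s + \rho^2\phi(u)\,u_x\bigl((f-u)f_{uu} - f_u^2\bigr).
\end{align*}
Every coefficient on the right-hand side is then bounded on each interval $[0,T]$ by the a priori bounds collected above.

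Finally, along the characteristic $dX/dt = f(u(t,X))$, $s$ satisfies a scalar linear ODE $s'(t) = A(t)s(t) + B(t)$ with $\|A\|_\infty,\|B\|_\infty$ bounded on $[0,T]$; Gronwall's inequality (or a maximum-principle argument in the spirit of Lemma \ref{linearrhs}) yields an exponential-in-$t$ bound on $\|R_x(t,\cdot)\|_\infty$, which via the algebraic identity immediately transfers to $\|\rho_x(t,\cdot)\|_\infty$. The main obstacle I anticipate is verifying the cancellation of the $(f-u)^{-1}$ factor in the source term for $s$: without that cancellation the coefficient of $u_x$ would be genuinely singular as $u$ approaches $\Sigma$, and the whole argument would collapse.
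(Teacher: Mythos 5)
Your proposal is correct and follows essentially the same route as the paper: it introduces the Riemann invariant $R=\rho\,\phi(u)(f-u)$, inverts the identity $R_x=\rho_x\phi(f-u)+\rho\phi f_u u_x$ to reduce the problem to bounding $R_x$, and derives the same linear transport equation (your source term $\rho^2\phi u_x((f-u)f_{uu}-f_u^2)$ is exactly the paper's \eqref{Rxeq} after substituting $R=\rho\phi(f-u)$), closing with bounded coefficients and a Gronwall-type argument. Your explicit verification of the finite-time lower bound on $|f-u|$ and the resulting cancellation of the $(f-u)^{-1}$ factor is the same cancellation the paper uses, just spelled out more carefully.
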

\begin{proof}
Note that from Lemma \ref{invreg}, $f(u(t,\cdot) - u(t,\cdot)<0$ for all further times and $R$ in turn, is well defined for any $t>0$.  Differentiating \eqref{R1} wrt $x$, we have
\begin{align*}
R_x & = \rho_x (f-u) e^{\int_{u_0}^u\frac{d\xi}{f(\xi)-\xi}} + \rho u_x f_u e^{\int_{u_0}^u\frac{d\xi}{f(\xi)-\xi}}\\
& = \rho _x (f-u) e^{\int_{u_0}^u\frac{d\xi}{f(\xi)-\xi}} + \rho (e-\rho) f_u e^{\int_{u_0}^u\frac{d\xi}{f(\xi)-\xi}} =: r.
\end{align*}
This leads to 
\begin{align}\label{rhox}
 \rho_x = \frac{r e^{-\int_{u_0}^u\frac{d\xi}{f(\xi)-\xi}}-f_u \rho (e-\rho)}{f-u}.
\end{align}
Hence, we conclude that $\rho_x$ is bounded if and only if $r$ is.
In view of this, differentiating \eqref{invsys1} w.r.t. $x$,  we obtain 
\begin{align*}
& r_t + fr_x + f_u u_x r = \rho r f_u+ \rho_x R f_u + \rho R f_{uu} u_x.
\end{align*}
Using $u_x= e-\rho$ and (\ref{rhox}) for $\rho_x$, we obtain 
\begin{align*}
 r_t + f r_x & = -f_u r(e-2\rho) + Rf_u \frac{r e^{-\int_{u_0}^u\frac{d\xi}{f(\xi)-\xi}}-f_u \rho (e-\rho)}{f-u} + \rho R f_{uu} (e-\rho)\\
& = -f_u r(e-3\rho) - \frac{R f_u^2 \rho(e-\rho)}{f-u} + \rho R f_{uu}(e-\rho)\\
& = -f_u r(e-3\rho) - \rho R (e-\rho) \left(\frac{f_u^2}{f-u} - f_{uu} \right). \numberthis \label{Rxeq}
\end{align*}
We used \eqref{R1} to obtain the second equation. 
Also, since $f_u(u)\leq 0$ for all $u$ in the domain of consideration, and 
$e_0(\cdot)\geq 0$, by Proposition \ref{erhobound1}, we have uniform bounds on $e,\rho$, and $R$.  Putting these facts together, we conclude that \eqref{Rxeq} is linear with coefficients bounded for any positive time. Therefore, $r$ is bounded for all times. 
Hence $\rho_x$ is bounded for all $x\in\mathbb{R}$, $t>0$.
\end{proof}

We can now prove the existence of classical solutions in the functional space as stated in Theorem \ref{thm1}.\\

\textit{Proof of Theorem \ref{thm1}:}
Putting together Propositions \ref{ubound}, \ref{erhobound1} and \ref{rhoxbound}, we obtain that if $e_0(x)\geq 0$ for all $x\in\mathbb{R}$, then $\rho(t,\cdot ),u(t,\cdot ),u_x(t,\cdot ),\rho_x(t,\cdot )$ remain bounded for all $t>0$.
For the only if part of Theorem \ref{thm1}, from \eqref{rhoe1b} we have that if $e(0)<0$ then $e = u_x+\rho\to-\infty$ in finite time. Therefore, $u_x\to-\infty$ in finite time and $u$ ceases to be in $C^1$. 
\qed
\begin{remark}
From Proposition \ref{rhobound}, we see that given the initial data does not cross $\{u:f(u)=u\}$, then even if the initial data is such that $e_0(x^\ast)<0$ for some $x^\ast$, there is no density concentration and solution breaks down through shock formation only. This is generic to a strictly hyperbolic system. However, this case is different from the usual strictly hyperbolic case. Unlike the usual situation where $\rho,u$ are both uniformly bounded, here, the particles may aggregate gradually. In other words, $\lim_{t\to\infty}\rho(t,x) = \infty$ when $e_0(x^\ast)<0$ for some $x^\ast$.  
\end{remark}

\section{Proof of Theorem \ref{thm3} ($f=f(\rho,u)$)}
\label{genf}
Let $ 0\leq\rho_0,u_0\in C^1_b(\mathbb{R})$. We begin by obtaining bounds on $u$ in terms of $\rho$. To do so, we show the existence of a curve $u=\phi(\rho)$ such that $f(\rho, \phi(\rho))\equiv\phi(\rho)$ for $\rho\in[0,M], M>0$. The result is stated in the following Lemma.
\begin{lemma}
\label{impfuncth}
If $\sup f_u(\rho, u) <1$ for $\rho \in [0, M]$, then there exists a unique continuous function, $\phi: [0, M]\longrightarrow\mathbb{R}$ such that $f(\rho, \phi(\rho)) \equiv \phi(\rho)$ for  $\rho \in [0, M]$. 
\end{lemma}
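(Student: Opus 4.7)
The strategy is to split the claim into three pieces: strict monotonicity of $u \mapsto f(\rho, u) - u$ (which yields uniqueness), coercivity of the same map (which yields existence), and smooth dependence on $\rho$ via the implicit function theorem.

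For each fixed $\rho \in [0, M]$, set $g_\rho(u) := f(\rho, u) - u$. The hypothesis gives a constant $c < 1$ with $f_u(\rho, u) \leq c$ on the relevant domain, so $g_\rho'(u) = f_u(\rho, u) - 1 \leq c - 1 < 0$. Thus $g_\rho$ is strictly decreasing in $u$, which already forces any zero to be unique and gives the uniqueness of $\phi(\rho)$. Integrating the same pointwise bound yields
\begin{equation*}
g_\rho(u) \leq g_\rho(0) + (c-1)u \quad \text{for } u > 0, \qquad g_\rho(u) \geq g_\rho(0) + (c-1)u \quad \text{for } u < 0,
\end{equation*}
so $g_\rho(u) \to \mp\infty$ as $u \to \pm\infty$. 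The intermediate value theorem then produces the (necessarily unique) zero $\phi(\rho)$, establishing existence for every $\rho \in [0, M]$.

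For the continuity of $\rho \mapsto \phi(\rho)$, I would apply the implicit function theorem to $F(\rho, u) := f(\rho, u) - u$, which is smooth with $F_u = f_u - 1 \neq 0$ everywhere on $[0, M] \times \mathbb{R}$. At each base point $(\rho_0, \phi(\rho_0))$ this produces a unique smooth local solution branch, and the global uniqueness from strict monotonicity guarantees that these local branches coincide on overlaps, assembling into a single function $\phi$ that is in fact $C^\infty$ on $[0, M]$. The only subtle point is interpreting the hypothesis $\sup f_u < 1$ uniformly (i.e.\ $f_u \leq c < 1$ with a gap bounded away from zero), which is exactly what makes $g_\rho$ coercive and prevents the branch from escaping to infinity; no further obstacle appears.
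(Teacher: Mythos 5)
Your proposal is correct, but it takes a genuinely different route from the paper. The paper works directly on the function space $C[0,M]$: with $h(\rho,u)=f(\rho,u)-u$ and constants $\epsilon\leq f_u\leq a<1$, it defines $(\Phi\psi)(\rho)=\psi+\mu\, h(\rho,\psi)$ with $\mu=1/(1-\epsilon)$, checks that $|\Phi\psi_1-\Phi\psi_2|\leq \frac{a-\epsilon}{1-\epsilon}|\psi_1-\psi_2|$, and invokes the Banach fixed point theorem to get existence, uniqueness, and continuity of $\phi$ in one stroke. You instead argue pointwise in $\rho$: strict monotonicity $g_\rho'\leq c-1<0$ gives uniqueness, the linear coercivity bound plus the intermediate value theorem gives existence, and the implicit function theorem (with $F_u=f_u-1$ bounded away from zero) plus global uniqueness patches the local branches into a single continuous, indeed smooth, $\phi$. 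Your route has two advantages: it uses only the stated hypothesis $\sup f_u<1$, whereas the paper's contraction constant quietly requires a lower bound $\epsilon\leq f_u$ not present in the lemma; and it delivers smoothness of $\phi$, which is what Theorem \ref{thm3} actually asserts, while the fixed point argument only yields continuity. The paper's approach, in turn, is more compact and avoids any discussion of gluing local solution branches or of one-sided neighborhoods at the endpoints of $[0,M]$.
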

\begin{proof}
Set $h(\rho,u):=f(\rho,u)-u$. Let $\epsilon, a$ be constants such that  $\epsilon \leq f_u\leq  a<1$. Define the mapping $\Phi:C[0,M]\longrightarrow C[0,M] $ as follows,
$$
(\Phi \psi)(\rho) = \psi + \mu h(\rho,\psi).
$$
We claim that $\Phi$ is a contraction mapping for a nonzero choice of $\mu$. Then by fixed point theorem, we have the existence of a unique $\phi$ such that 
$$ 
h(\rho, \phi(\rho)) = 0\implies f(\rho, \phi(\rho)) = \phi (\rho),\quad \rho\in[0,M]. 
$$
To prove the above claim, we evaluate 
\begin{align*}
|\Phi \psi_1 - \Phi\psi_2| & = | (\psi_1 - \psi_2) +\mu (h(\rho,\psi_1)-h(\rho,\psi_2))|\\
& = |(1+ \mu h_u)(\psi_1 - \psi_2)|.
\end{align*}
Noting $\epsilon-1\leq h_u\leq a-1<0$.  Choosing $\mu = 1/(1-\epsilon)$, we have
\begin{align*}
|\Phi \psi_1 - \Phi \psi_2| & = \left( 1 + \frac{h_u}{1-\epsilon} \right)|\psi_1 - \psi_2| \\
& \leq \frac{a-\epsilon}{1-\epsilon}|\psi_1 - \psi_2|.
\end{align*}
Since $\epsilon \leq a<1$, $\Phi$ is a contraction. 

\end{proof}
Let $(\rho_0\geq 0,u_0)\in C^1(\mathbb{R})\times C^1(\mathbb{R})$, be bounded. We proceed with obtaining bounds on $u$ in terms of $\rho$, with results stated in the following Proposition.
\begin{proposition}
\label{uboundprop} Assume $\rho=\rho(t, x)$ is bounded, and $\sup f_u(\rho, u) <1$ for solutions under consideration. Then
\begin{align}
\label{ubound2}
& \min\{ \inf u_0 , \min \phi(\rho)\}\leq u(t,\cdot ) \leq \max\{ \sup u_0 , \max \phi(\rho)\},
\end{align}
for as long as $\rho$ exists. $\phi$ is as given in Lemma \ref{impfuncth}. 
\end{proposition}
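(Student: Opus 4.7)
The plan is to reduce the bound on $u$ to an ODE along the characteristic $\frac{dX}{dt}=u(t,X)$, $X(0)=x_0$, exactly as in the proof of Proposition \ref{ubound}. Writing $U(t)=u(t,X(t))$ and $P(t)=\rho(t,X(t))$, equation \eqref{mainb} gives
\begin{equation*}
\dot U(t)= P(t)\bigl(f(P(t),U(t))-U(t)\bigr).
\end{equation*}
Since $\rho$ is assumed bounded by some $M>0$, Lemma \ref{impfuncth} produces the continuous curve $\phi:[0,M]\to\mathbb R$, so $\phi_{\min}:=\min_{[0,M]}\phi$ and $\phi_{\max}:=\max_{[0,M]}\phi$ are finite. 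The assumption $f_u<1$ ensures the map $u\mapsto f(\rho,u)-u$ is strictly decreasing in $u$ for each fixed $\rho\in[0,M]$, and since $\phi(\rho)$ is its unique zero we have the crucial sign relation
\begin{equation*}
\operatorname{sgn}\bigl(f(\rho,u)-u\bigr)=\operatorname{sgn}\bigl(\phi(\rho)-u\bigr),\qquad \rho\in[0,M].
\end{equation*}

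To prove the upper bound, set $\bar u:=\max\{\sup u_0,\phi_{\max}\}$, fix an arbitrary $\epsilon>0$, and consider $V_\epsilon(t):=U(t)-\bar u-\epsilon t$. Then $V_\epsilon(0)\le 0$. If $V_\epsilon$ ever reached zero, let $t_0>0$ be the first such time; at $t_0$ one would have $\dot V_\epsilon(t_0)\ge 0$, and simultaneously $U(t_0)=\bar u+\epsilon t_0>\bar u\ge\phi(P(t_0))$, forcing $f(P(t_0),U(t_0))-U(t_0)<0$ and hence $\dot U(t_0)\le 0$. This gives $\dot V_\epsilon(t_0)\le-\epsilon<0$, a contradiction. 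Thus $V_\epsilon(t)<0$ for all $t$ in the interval of existence, and letting $\epsilon\downarrow 0$ yields $U(t)\le\bar u$. The lower bound $U(t)\ge \min\{\inf u_0,\phi_{\min}\}$ is obtained by the symmetric argument applied to $U(t)-\underline u+\epsilon t$ with $\underline u:=\min\{\inf u_0,\phi_{\min}\}$. Since $x_0$ is arbitrary and characteristics cover $\mathbb R$ for each $t$, this proves \eqref{ubound2}.

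I do not anticipate a serious obstacle: the only delicate point is the possibility of equality $\dot U(t_0)=0$ at the critical moment (which occurs, for instance, whenever $P(t_0)=0$), and this is precisely what the $-\epsilon t$ perturbation is designed to circumvent. A secondary, purely bookkeeping item is to ensure $\phi(P(t))$ actually makes sense along the characteristic, which follows because $P(t)\in[0,M]$ for every $t$ by the standing hypothesis that $\rho$ is bounded and nonnegative; consequently $\phi_{\min}\le\phi(P(t))\le\phi_{\max}$ throughout, and the comparison with $\bar u$ and $\underline u$ used above is legitimate.
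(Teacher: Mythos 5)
Your argument is correct and takes essentially the same route as the paper: reduce to the ODE $\dot U = P\,(f(P,U)-U)$ along the characteristic $dX/dt=u$ of \eqref{chpathu} and compare $U$ with $\max\phi$ and $\min\phi$ from Lemma \ref{impfuncth}, where the paper implements the comparison via the mean-value factorization $\dot U=\xi\,(U-\phi(P))$ with $\xi\le 0$ and you use the equivalent monotonicity/sign relation together with an $\epsilon$-barrier. The only cosmetic point is the boundary case $u_0(x_0)=\bar u$, where the ``first time $V_\epsilon=0$'' is $t_0=0$; there one still gets $\dot V_\epsilon(0)\le-\epsilon<0$, so $V_\epsilon$ becomes negative immediately and your contradiction argument applies verbatim at any later zero.
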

\begin{proof} 
Along the characteristic path $(t,X)$ defined in \eqref{chpathu}, 
we have the following ODE,
\begin{align}
\label{eval2path}
& \frac{du}{dt} = \rho(t,X)\bigg( f(\rho(t,X),u(t,X)) - u(t,X) \bigg).
\end{align}
This by Lemma \ref{impfuncth} can be rewritten as 
$$
\frac{du}{dt} = \xi(t) (u(t,X)-\phi(\rho(t, X)),
$$
where $\xi:= \rho(t,X)( f_u(\rho(t,X),\tilde u(t,X)) -1)$ with $\tilde u$ being  
a mean value between $u$ and $\phi(\rho)$.  By the assumption we have $\xi \leq 0$, hence we have 
$$
\xi(t) (u(t,X)- \min \phi(\rho))\leq \frac{du}{dt} \leq  \xi(t) (u(t,X)- \max \phi(\rho)).
$$  
Therefore by comparison, we have on the path, $(t,X)$, with any fixed point, $(0,x_0)$, both 
$$
u(t,X) \leq \max\{ u_0(x_0), \max \phi(\rho)\}, 
$$
and 
$$
u(t,X) \geq  \min \{ u_0(x_0), \min \phi(\rho)\},
$$
as long as $\rho$ exists.
 Combining all characteristics, we obtain the bounds as stated in \eqref{ubound2}. 
\end{proof}

By using the technique as in Proposition \ref{erhobound1}, we will show uniform bounds on $\rho$ and $u_x$ by a coupling between $\rho$ and $e(t,x):= u_x(t,x) + \rho(t,x)$. The result is stated 
in the following Proposition.
\begin{proposition}
\label{erhobound2}
Let $f_u(\rho, u)\leq 0$ for solutions under consideration. If $e_0(x),\rho_0(x)\in [0,M]$ for all $x\in\mathbb{R}$, then $e(t,\cdot),\rho(t,\cdot )\in [0,M]$ for all $t>0$. 
Moreover, if $e_0(x^\ast ) <0 $ for some $x^\ast$, then $\exists x_c, t_c>0$ such that $\lim_{t\to t_c^-} e(t,x_c) = -\infty$. 
\end{proposition}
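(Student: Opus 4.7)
The plan is to mimic the proof of Proposition \ref{erhobound1}, which only used that $\rho$ and $e$ satisfy a coupled pair of transport equations of the right structural form, together with comparison along characteristics. The first task is to rederive the analog of \eqref{coupleerho} for general $f=f(\rho,u)$ and to observe that, although the characteristic speed for $\rho$ changes, the right-hand sides retain exactly the form exploited in Proposition \ref{erhobound1}.

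Starting from the continuity equation $\rho_t+(\rho f)_x=0$, I expand $(\rho f)_x=(f+\rho f_\rho)\rho_x+\rho f_u u_x$ and substitute $u_x=e-\rho$ to obtain
\[
\rho_t+(f+\rho f_\rho)\rho_x = f_u\rho(\rho-e).
\]
For the $e$ equation, I differentiate \eqref{mainb} in $x$, add the continuity equation, and observe that the $\rho f_\rho\rho_x$ and $\rho f_u u_x$ contributions cancel cleanly, leaving
\[
e_t+u e_x=-e(e-\rho),
\]
identical in form to \eqref{coupleerho2}.

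From here the argument is verbatim that of Proposition \ref{erhobound1}. Recasting the two equations as ODEs along the characteristic paths $dY/dt=f+\rho f_\rho$ and $dX/dt=u$ respectively, the origin is a critical point of each scalar ODE, so nonnegativity of the initial data propagates to $\rho(t,\cdot),e(t,\cdot)\geq 0$. The upper bound follows by comparison: as long as $\rho,e\in[0,M]$, the assumption $f_u\leq 0$ gives
\[
\frac{d}{dt}\rho(t,Y)\leq f_u\,\rho\,(\rho-M),\qquad \frac{d}{dt}e(t,X)\leq -e(e-M),
\]
whose right-hand sides vanish at the level $M$, so neither quantity can cross $M$. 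Finally, if $e_0(x^\ast)<0$ at some $x^\ast$, then along the characteristic $X$ issuing from $x^\ast$ one has $e(t)<0$ while $\rho(t)\geq 0$, hence $de/dt=-e(e-\rho)\leq -e^2$, which integrates to $e(t)\leq e_0(x^\ast)/(1+te_0(x^\ast))\to-\infty$ at some $t_c\leq -1/e_0(x^\ast)$.

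The only real subtlety relative to Proposition \ref{erhobound1} lies in the $\rho$-dependence of $f$: verifying that the $e$-equation retains its clean form requires checking that the $\rho f_\rho\rho_x$ contributions coming from the $x$-differentiated momentum equation precisely cancel those arising from substituting $\rho_t=-(f+\rho f_\rho)\rho_x-\rho f_u u_x$. Once this cancellation is confirmed, both the ODE comparison and the Riccati-type blowup argument transfer without any change, since the transport speeds $f+\rho f_\rho$ and $u$ never enter the pointwise estimates along characteristics.
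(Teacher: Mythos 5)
Your proposal is correct and follows essentially the same route as the paper: you rederive the coupled system \eqref{coupleerho} with the modified characteristic speed $\lambda_1=f+\rho f_\rho$ (checking the cancellation that keeps the $e$-equation in the form $e_t+ue_x=-e(e-\rho)$), and then repeat the comparison and Riccati blow-up arguments of Proposition \ref{erhobound1} along the two characteristic families, which is exactly what the paper does.
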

\begin{proof} We define the characteristic path, $(t,Y)$ corresponding to $\lambda_1$ with $Y$ satisfying the ODE,
\begin{align}
\label{chpathfgen}
& \frac{dY}{dt} = \rho (t,Y)f_\rho (\rho(t,Y),u(t,Y)) + f(\rho(t,Y),u(t,Y)),\quad Y(0) = y_0,
\end{align}
and recall the  characteristic path $(t, X)$, we get the following system,
\begin{subequations}
\label{rhoe2}
\begin{align}
& \frac{d}{dt}\rho(t,Y) = f_u \Big(\rho(t,Y),u(t,Y)\Big) \rho(t,Y)(\rho(t,Y) - e(t,Y)), \label{rhoe2a}\\
& \frac{d}{dt}e(t,X) = -e(t,X)(e(t,X)-\rho(t,X)). \label{rhoe2b}
\end{align}	
\end{subequations}
Again we set $e_0(x):=u_{0x}(x) + \rho_0(x)$ as initial data for $e$. With the condition $f_u(
\rho, u) \leq 0$, the proof of the claimed results is entirely similar to that in the proof of Proposition
\ref{erhobound1}.
\end{proof}
Next, we will show a sufficient condition to bound $\rho_x$. In view of this, taking derivative of transport equations for $\rho,e$ and setting $\xi:=\rho_x$ and $\eta:=e_x$, we obtain,
\begin{subequations}
\label{rhoxexgen}
\begin{align}
& \xi_t + \lambda_1 \xi_x = -(\rho f)_{\rho\rho}\xi^2 + \Big( 2\rho f_{\rho u}(\rho-e) + f_u(3\rho -2e) \Big)\xi \label{rhoxexgen1}  \\
& \qquad\qquad \qquad - \rho f_u\eta - f_{uu}\rho(\rho-e)^2, \nonumber \\
& \eta_t + u\eta_x = e\xi + (2\rho-3e)\eta. \label{rhoxexgen2}
\end{align}
\end{subequations}
We state a short Lemma which is the first step towards proving bounds on $\rho_x$.
\begin{lemma}
\label{etabound}
Suppose $e\geq 0$. As long as $\xi$ exists:
\begin{enumerate}
    \item 
    if $\eta(0,x)\geq 0$ and $\xi(t,x)\geq 0$ for $t\geq 0, x\in\mathbb{R}$, then $\eta(t,x)\geq 0$ for $t\geq 0, x\in\mathbb{R}$,
    \item 
    if $\eta(0,x)\leq 0$ and $\xi(t,x)\leq 0$ for $t\geq 0, x\in\mathbb{R}$, then $\eta(t,x)\leq 0$ for $t\geq 0, x\in\mathbb{R}$.
\end{enumerate}
\end{lemma}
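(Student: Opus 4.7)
The plan is to reduce the transport equation \eqref{rhoxexgen2} for $\eta$ to a scalar linear ODE along the characteristic $(t,X)$ defined by \eqref{chpathu}, and then apply a standard integrating-factor (Duhamel) argument. Specifically, along $\dot X = u(t,X)$, $X(0) = x_0$, the quantity $\eta(t, X(t))$ satisfies
\begin{equation*}
\frac{d}{dt}\eta(t, X(t)) = (2\rho - 3e)(t, X(t))\,\eta(t, X(t)) + e(t, X(t))\,\xi(t, X(t)).
\end{equation*}
This is \emph{linear} in $\eta$: the homogeneous coefficient $A(t) := (2\rho - 3e)(t, X(t))$ is bounded on any interval where $\rho$ and $e$ are (which Proposition \ref{erhobound2} guarantees under the hypothesis $e \geq 0$), and the forcing $(e\xi)(t, X(t))$ is well defined as long as $\xi$ exists.

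Setting $\mu(t) := \exp\!\bigl(-\int_0^t A(s)\,ds\bigr) > 0$, multiplying through and integrating from $0$ to $t$ yields the representation
\begin{equation*}
\eta(t, X(t)) \;=\; \frac{\eta(0,x_0)}{\mu(t)} \;+\; \frac{1}{\mu(t)}\int_0^t \mu(s)\,e(s, X(s))\,\xi(s, X(s))\,ds.
\end{equation*}
Because $\mu > 0$ and $e \geq 0$ by hypothesis, each summand on the right inherits its sign separately from $\eta(0,x_0)$ and from $\xi$. For assertion (1), both $\eta(0,\cdot)\geq 0$ and $\xi\geq 0$, so $\eta(t, X(t)) \geq 0$; for assertion (2), the reversed inequalities give $\eta(t, X(t)) \leq 0$. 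Since $x_0$ (and hence the characteristic through it) is arbitrary, the conclusion propagates pointwise to every $x \in \mathbb{R}$.

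I do not anticipate a genuine obstacle here: the structural reason the lemma works is that the inhomogeneity in the $\eta$ equation is exactly the product $e\xi$ (no self-interaction like $\xi^2$ or $\eta^2$) and the principal part is linear in $\eta$, so the signs of the two independent sources — initial data and forcing — do not compete. The only mild housekeeping is verifying that the characteristic curve and the integrating factor are well defined on the interval of consideration, which is immediate from $u \in C^1$ and the uniform bounds on $\rho, e$ from Proposition \ref{erhobound2} whenever $\xi$ has not yet blown up.
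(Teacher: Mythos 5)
Your proposal is correct and follows essentially the same route as the paper: both reduce \eqref{rhoxexgen2} to a linear ODE for $\eta$ along the characteristic $\frac{dX}{dt}=u$ and use the integrating-factor (Duhamel) representation, after which positivity of the exponential weights and $e\geq 0$ force $\eta$ to inherit its sign from $\eta(0,\cdot)$ and $\xi$. No substantive differences from the paper's argument.
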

\begin{proof}
If $\xi$ exists then \eqref{rhoxexgen2} can be reduced to ODE along the path $(t,X)$,
$$
\frac{d\eta}{dt} = e\xi(t,X) + (2\rho-3e)\eta(t,X). 
$$
Upon integration, we obtain
$$
\eta(t,X) = \eta(0,x_0)e^{\int (2\rho - 3e)\, d\tau} + \int_0^t e^{\int_s^t (2\rho - 3e) \, d\tau} e\xi\, ds.
$$
From this expression, one can conclude that if $\eta(0,x_0)\geq 0$ and $\xi(t,x)\geq 0$ for $t\geq 0, x\in\mathbb{R}$, then $\eta(t,x)\geq 0$. The second assertion can be concluded similarly. 
\end{proof}
Lemma \ref{etabound} will help us to prove the following Proposition.
\begin{proposition}
\label{xibound}
(Bounds on $\xi$) Assume $e(0,x)\geq 0$ $\forall x\in\mathbb{R}$.
\begin{enumerate}
    \item Suppose $\eta(0,x),\xi(0,x) \geq 0$ for all $x\in\mathbb{R}$. Also assume $(\rho f)_{\rho\rho}\geq 0$ and $f_{uu}\leq 0$ for solutions under consideration. Then $\eta(t,x)\geq 0$, $\xi(t,x)\geq 0$, for $t>0, x\in\mathbb{R}$ and for any $t>0$, $\xi(t,\cdot)$ has a uniform upper bound. 
    \item Suppose $\eta(0,x),\xi(0,x) \leq 0$ for all $x\in\mathbb{R}$. Also assume $(\rho f)_{\rho\rho}\leq 0$ and $f_{uu}\geq 0$ for solutions under consideration. Then $\eta(t,x)\leq 0$, $\xi(t,x)\leq 0$, for $t>0, x\in\mathbb{R}$ and for any $t>0$, $\xi(t,\cdot)$ has a uniform lower bound. 
\end{enumerate}
\end{proposition}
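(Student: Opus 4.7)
The plan has two stages: first establish the sign preservation $\xi,\eta\geq 0$ (case 1) or $\xi,\eta\leq 0$ (case 2), and then extract the uniform bound on $\xi$ by dropping the favorable quadratic self-interaction and invoking (the proof of) Lemma \ref{linearrhs}. I treat case (1) explicitly; case (2) follows by running the same argument on $(-\xi,-\eta)$, which switches the signs of $(\rho f)_{\rho\rho}$ and $f_{uu}$ but leaves $f_u\leq 0$ intact. By Propositions \ref{uboundprop} and \ref{erhobound2}, $\rho,e\in[0,M]$ and $u$ is bounded as long as the solution exists, so every coefficient in \eqref{rhoxexgen1}--\eqref{rhoxexgen2} that is independent of $\xi,\eta$ is uniformly bounded in terms of $M$ and $\|f\|_{C^2}$ on the compact range of $(\rho,u)$.

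For sign preservation I would use a barrier argument in the spirit of the proof of Proposition \ref{rhobound}. Fix a horizon $T>0$, a small $\epsilon>0$ and a parameter $\delta>0$ to be chosen, and set
$$
\tilde\xi(t,x):=\xi(t,x)+\epsilon e^{\delta t},\qquad \tilde\eta(t,x):=\eta(t,x)+\epsilon e^{\delta t}.
$$
Both are strictly positive at $t=0$. Suppose toward contradiction that $(t_0,x_0)\in(0,T]\times\mathbb{R}$ is a first point at which one of $\tilde\xi,\tilde\eta$ touches zero. At such a point the vanishing quantity has $\partial_x=0$ and $\partial_t\leq -\delta\epsilon e^{\delta t_0}$, while the other is $\geq -\epsilon e^{\delta t_0}$. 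Substituting into \eqref{rhoxexgen1} or \eqref{rhoxexgen2}, the sign assumptions $(\rho f)_{\rho\rho}\geq 0,\ f_u\leq 0,\ f_{uu}\leq 0$ combined with $\rho,e\geq 0$ force every linear-in-$(\xi,\eta)$ term and every constant-in-$(\xi,\eta)$ term on the right-hand side to be bounded below by $-C\epsilon e^{\delta t_0}$ for a constant $C=C(M,\|f\|_{C^2})$; the only quadratic contribution $-(\rho f)_{\rho\rho}\xi^2$ is of order $\epsilon^2 e^{2\delta t_0}$ at the contradiction point and is therefore dominated by the linear-in-$\epsilon$ barrier. Choosing $\delta>C$ and then $\epsilon$ small enough (depending on $T$ and $\delta$) produces the contradiction $-\delta\epsilon e^{\delta t_0}\geq -C\epsilon e^{\delta t_0}-O(\epsilon^2 e^{2\delta T})$. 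Sending $\epsilon\to 0^+$ and then $T\to\infty$ gives $\xi,\eta\geq 0$ on $[0,\infty)\times\mathbb{R}$.

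Once non-negativity is established, the quadratic term $-(\rho f)_{\rho\rho}\xi^2$ in \eqref{rhoxexgen1} is non-positive and may be dropped, leaving the linear differential inequality
$$
\xi_t+\lambda_1\xi_x\leq \bigl(2\rho f_{\rho u}(\rho-e)+f_u(3\rho-2e)\bigr)\xi-\rho f_u\eta-f_{uu}\rho(\rho-e)^2,
$$
paired with the linear equation \eqref{rhoxexgen2}, in which all coefficients are bounded in terms of $M$ and $\|f\|_{C^2}$. The barrier argument used in the proof of Lemma \ref{linearrhs} then adapts verbatim to this one-sided system (replace $G=p-\beta e^{\delta\gamma t}$ by $G=\xi-\beta e^{\delta\gamma t}$ and the equality by $\leq$), yielding the claimed uniform upper bound $\|\xi(t,\cdot)\|_\infty\leq\beta e^{\gamma t}$ on any finite interval.

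The main obstacle, and the reason for the matched-sign conditions on $(\rho f)_{\rho\rho}$ and $f_{uu}$, is the quadratic self-interaction $-(\rho f)_{\rho\rho}\xi^2$. Without a sign constraint on $(\rho f)_{\rho\rho}$ this term could drive $\xi$ to blow up along the $\lambda_1$-characteristic, and the reduction to Lemma \ref{linearrhs} would fail. The companion assumption $f_{uu}\leq 0$ is what makes the source term $-\rho f_u\eta-f_{uu}\rho(\rho-e)^2$ non-negative on $\{\xi=0,\eta\geq 0\}$, and is therefore precisely what makes the quadrant $\{\xi,\eta\geq 0\}$ invariant for the coupled dynamics.
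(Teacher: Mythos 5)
Your proposal is correct and follows essentially the same route as the paper: establish invariance of the quadrant $\{\xi\geq 0,\ \eta\geq 0\}$ (resp.\ the opposite quadrant) via an $\epsilon$-barrier argument using $f_u\leq 0$, $f_{uu}\leq 0$ and the boundedness of $\rho,e,u$, and then obtain the upper bound on $\xi$ from the favorable sign of the quadratic term $-(\rho f)_{\rho\rho}\xi^2$ together with bounded coefficients. Your only deviations are cosmetic and in fact helpful: you run a simultaneous time-growing barrier on $(\xi,\eta)$ instead of lifting the initial data and invoking Lemma \ref{etabound}, and you make the upper-bound step explicit by reducing the coupled system to the Lemma \ref{linearrhs}-type comparison, where the paper only asserts it.
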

\begin{proof}
We will prove the first assertion. The second can be proved similarly. Let $\epsilon>0$ be fixed. We add this barrier of $\epsilon$ to $\xi(0,x),\eta(0,x)$ and take the resulting sum to be the new initial profiles. Consequently, we have $\xi(0,\cdot)\geq\epsilon$ and $\eta(0,\cdot)\geq\epsilon$. Note that if we are able to show $\xi(t,\cdot)\geq 0$ for all further times along with $\xi$ being upper bounded, then an application of Lemma \ref{etabound} and then letting $\epsilon\to 0^+$ proves the Proposition. 

For this, we will use \eqref{rhoxexgen1}. Note that $(\rho f)_{\rho\rho}\geq 0$ implies the coefficient of $\xi^2$ in right hand side of \eqref{rhoxexgen1} is nonpositive and hence, $\xi$ is bounded from above as long as the all the other terms on right hand side are bounded. 
Next, owing to the assumptions, from \eqref{rhoxexgen1} we have,
\begin{align*}
\xi_t + \lambda_1\xi_x & \geq -(\rho f)_{\rho\rho} \xi^2 + \Big( 2\rho f_{\rho u}(\rho-e) + f_u(3\rho -2e) \Big)\xi\\
& = \xi \Big( -(\rho f)_{\rho\rho}\xi + 2\rho f_{\rho u}(\rho-e) + f_u(3\rho -2e) \Big).
\end{align*}
Therefore, $\xi(t,\cdot)\geq 0$. Above calculations hold for all $\epsilon>0$. Letting $\epsilon\to 0^+$ gives the result.
\end{proof}

Using the tools developed in this section, we finally prove the main Theorem.

\textit{Proof of Theorem \ref{thm3}:} From Proposition \ref{uboundprop}, we conclude that $u$ is uniformly bounded if $\rho$ is. Based on the assumption on initial data we have 
 $e_0(x)\geq 0$ for all $x\in\mathbb{R}$, then from Proposition \ref{erhobound2}, we have that $\rho$ is uniformly bounded. Along with that, we also obtain uniform bounds on $u_x$ since $e$ and $\rho$ are uniformly bounded. This proves the first two assertions of the Theorem. 
 
For the third assertion (global solution), assuming its hypothesis, we obtain that from Proposition \ref{xibound}, $\rho_x$ is bounded for all times. Hence, from Theorem \ref{local2}, we obtain the global existence of solutions. 

Assertion 4 (Finite time breakdown): If $e_0(x^*)<0$ for some $x^*$. From \eqref{rhoe2b}, along the characteristic path, $(t,X)$, with $X(0) = x^*$, we have that $e(0)<0$. Then from Proposition \ref{erhobound2}, $e = u_x+\rho\to-\infty$ as $t \to t_c$, a finite time. Since $\rho(t,\cdot)\geq 0$, we conclude $u_x\to-\infty$ in finite time.  \qed

\appendix
\section{Proof of Theorem \ref{local2}}
We begin with an auxiliary system of form 
\begin{align}\label{UU}
U_t +\Lambda U_x= F(U),
\end{align}
with $f=f(n, v)$
$$
U= \left[
\begin{array}{c}
n\\
v\\
q
\end{array}
\right], \quad \Lambda=\text{diag}(nf_n+f, v, v), \quad F= \left[
\begin{array}{c}
f_v n (n -q )\\
n(f-v)\\
q(n - q)
\end{array}
\right].
$$
This is a symmetric hyperbolic system. Given $U(0,\cdot)\in C^1_b(\mathbb{R})$, \cite[Theorem 7.7.1]{Da16} gives the local existence of classical solutions to \eqref{UU}, $U$ in $C^1([0,T)\times \mathbb{R})$ for some $T \in(0, \infty]$. The interval $[0, T)$ is maximal in the sense that if $T<\infty$, then as $t \uparrow T$, 
$\|\partial_x U(t, \cdot)\|_\infty \to \infty$ and the range of $U$ becomes unbounded. 
We claim that if $q(0,x)=n(0,x)+v_x(0,x)$, then  
\begin{align}\label{qnv}
q(t, x)=n(t, x)+v_x(t, x) \quad t\in (0, T). 
\end{align}
Substitution of this relation into the first two equations in the auxiliary system, we obtain 
$$
n_t +(fn)_x=0, \quad v_t+vv_x=n(f-v). 
$$
This is the same system as \eqref{matsysgen}.  We thus take $n(x, 0)=\rho_0(x), v(x, 0)=u_0(x)$, and obtain the local solution by setting 
$$
(\rho, u)=(n, v)\in C^1([0,T); \mathbb{R}). 
$$
We now return to the proof of (\ref{qnv}). 
Using the $(n,q,v)$ system,
\begin{align*}
(v_x+n-q)_t + v(v_x+n -q)_x & = v_{xt} + vv_{xx} +n_t + vn_x - q_t - vq_x\\
& =  -v_x^2 + n_x(f-v)+ n(f-v)_x+ (v-f-n f_n)n_x \\
& \quad + f_v n(n-q) -q(n -q)\\
& = -v_x^2 + n(f_v-1)v_x + f_v n(n-q) - q(n-q)\\
& = -v_x(v_x+n) + f_v n(v_x+n-q)-q(n-q)\\
& = -v_x(v_x+n-q) + f_v n(v_x+n-q)-q(n+v_x-q)\\
& = -(v_x-n f_v +q)(v_x+n -q).
\end{align*}
Since $\left. (v_x+n-q)\right|_{t=0} = 0$, we have that $q=n+v_x$ for any $t>0$. 

Therefore, for \eqref{matsysgen}, we also have $\rho+u_x$ in $C^1((0,T)\times\mathbb{R})$.

Furthermore, using the 3rd equation and $q=n+v_x$ we obtain the following,  
$$
q_{xt} +vq_{xx}=(2n-3q)q_x +qn_x.  
$$
This implies that for bounded $(n, q)$, $q_x$ is bounded if $n_x$ is bounded.  Returning to the $(\rho, u)$ variables, we see that the local solution can be extended beyond $T$ if  $\partial_x(\rho, u)$ is bounded and $(\rho, u)$ is bounded at time $T$.

\section*{Acknowledgments}
This research was supported by the National Science Foundation under Grant DMS1812666.

\bigskip

\bibliographystyle{abbrv}

\begin{thebibliography}{10}
\bibitem{BL19}
 M. Bhatnagar, H. Liu.
 \newblock Critical thresholds in one-dimensional damped Euler-Poisson systems.
 \newblock {\em Math. Mod. Meth. Appl. Sci.}, 30(5):891--916, 2020. 	
 
 \bibitem{BL202}
 M. Bhatnagar, H. Liu.
 \newblock Critical thresholds in nonlocal Euler system with relaxation.
 \newblock {\em ArXiv preprint: 2010.02362}, 2020. 	
 

\bibitem{CCTT16}
 J. A. Carrillo, Y.-P. Choi, E. Tadmor, and C. Tan.
 \newblock Critical thresholds in 1D Euler equations with non-local forces.
 \newblock {\em  Math. Mod. Meth. Appl. Sci.}, 26:185--206, 2016.
 
 
\bibitem{CCZ16}
 J.A. Carrillo, Y.P. Choi, E. Zatorska.
 \newblock On the pressureless damped Euler-Poisson equations with quadratic confinement.
 \newblock {\em  Math. Mod. Meth. Appl. Sci.}, 26, 2311-2340, 2016.

\bibitem{Da16}
C.M. Dafermos.
 \newblock Hyperbolic Conservation Laws in Continuum Physics.
 \newblock {\em  Springer-Verlag Berlin Heidelberg}, Vol. 325, 2010 (3rd Ed.).

\bibitem{DKRT17}
 T. Do, A. Kiselev, L. Ryzhik, C. Tan.
 \newblock Global regularity for the fractional Euler alignment system.
 \newblock {\em  Arch. Rat. Mech. Anal.}, 228: 1--37, 2017.
 
\bibitem{ELT01}
 S. Engelberg, H. Liu, E. Tadmor.
 \newblock Critical thresholds in Euler-Poisson equations.
 \newblock {\em  Indiana University Math. Journal}, 50:109--157, 2001.


 \bibitem{GM62} 
 S. R. de Groot, and P. Mazur.
 \newblock Non-Equilibrium Thermodynamics. 
  \newblock {\em North-Holland Publishing Company}, Amsterdam, 1962.
  
 
 \bibitem{HT17}
S.M. He,  E. Tadmor
 \newblock  Global regularity of two-dimensional flocking hydrodynamics.
 \newblock {\em C. R. Math.}, 355(7): 795--805, 2017. 

  
\bibitem{KT18}
 A. Kiselev, C.Tan.
 \newblock Global regularity for 1D Eulerian dynamics with singular interaction forces.
 \newblock {\em  SIAM J. Math. Anal.}, 50(6):6208--6229, 2018.
 
\bibitem{Lax64}
P. Lax.
 \newblock Development of singularities of solutions of nonlinear hyperbolic partial differential equations.
 \newblock {\em  Journal of Math. Phys.}, 5, 611, 1964.

\bibitem{LL08}
 T. Li, H. Liu.
 \newblock Critical Thresholds in a relaxation model for traffic flows.
 \newblock {\em Indian Univ. Math. J.}, 57:1409--1431, 2008.
 
 
\bibitem{LL09j}
 T. Li, H. Liu.
 \newblock Critical Thresholds in a relaxation system with resonance of characteristic speeds.
 \newblock {\em  Disc. Cont. Dyn. Sys-Series A}, 24(2):511--521, 2009.
 
\bibitem{LL09}
 T. Li, H. Liu.
 \newblock Critical thresholds in hyperbolic relaxation systems.
 \newblock {\em  J. Diff. Eqns.}, 247:33--48, 2009.

 
 \bibitem{LT01}
 H. Liu, E. Tadmor.
 \newblock Critical thresholds in a convolution model for nonlinear conservation laws.
 \newblock {\em  SIAM J. Math. Anal.}, 33(4): 930--945, 2001.
  

\bibitem{LT02} 
H. Liu and E. Tadmor.
 \newblock  Spectral dynamics of the velocity gradient field in restricted flows.
 \newblock {\em Commun.  Math. Phys.}, 228: 435--466, 2002.

\bibitem{LT03}
 H. Liu, E. Tadmor.
 \newblock Critical Thresholds in 2-D restricted Euler-Poisson equations.
 \newblock {\em SIAM J. Appl. Math.}, 63(6):1889--1910, 2003.
 
 
\bibitem{LT04}
 H. Liu, E. Tadmor.
 \newblock Rotation prevents finite-time breakdown.
 \newblock {\em  Physica D}, 188:262--276, 2004.
 


 

 




\bibitem{TT14}
 E. Tadmor, C. Tan.
 \newblock Critical Thresholds in flocking hydrodynamics with non-local alignment.
 \newblock {\em  Phil. Trans. R. Soc. A.}, 372: 20130401, 2014.

\bibitem{TW08}
 E. Tadmor, D. Wei.
 \newblock On the global regularity of subcritical Euler-Poisson equations with pressure.
 \newblock {\em  J. Eur. Math. Soc.}, 10:757--769, 2008.

 \bibitem{Wa19}
 W.A. Yong.
\newblock Intrinsic properties of conservation-dissipation formalism of irreversible thermodynamics.
 \newblock {\em Phil. Trans. R. Soc. A} 378:20190177, 2020.  
 
\bibitem{WTB12}
 D. Wei, E. Tadmor, H. Bae.
 \newblock Critical Thresholds in multi-dimensional Euler-Poisson equations with radial symmetry.
 \newblock {\em  Commun. Math. Sci.}, 10(1):75--86, 2012.		


\end{thebibliography}

\end{document}